\newtheorem{thm}{Theorem}[section]
\newtheorem{conjecture}[thm]{Conjecture}
\newtheorem{corollary}[thm]{Corollary}
\newtheorem{remark}[thm]{Remark}
\newtheorem{proposition}[thm]{Proposition}
\newtheorem{prop}[thm]{Proposition}
\numberwithin{equation}{section}
 \newtheorem{lem}[thm]{Lemma}
 \newtheorem{defn}[thm]{Definition}
 \newcommand{\Real}{{\mathbb{R}}}
 \newcommand{\N}{{\mathbb{N}}}
 \newcommand{\Z}{{\mathbb{Z}}}
 \newcommand{\Rd}{{\Real^d}}
 \renewcommand{\P}{{\bf P}}
 \newcommand{\E}{{\bf E}}
\newcommand{\bB}{\mathrm{I\! B\!}}
\newcommand{\bR}{\mathrm{I\! R\!}}
\title[Heat kernel monotonicity]{Neumann Heat kernel 
monotonicity}
\author[Ba\~nuelos,  Kulczycki,   Siudeja]
{R{.} Ba\~nuelos*,  T{.} Kulczycki** and B{.} Siudeja*}
\address{*Department of Mathematics,
Purdue University,
West Lafayette, IN 47907-1395\newline
**Institute of Mathematics,
 Wroc{\l}aw University of Technology,
50-370 Wroc{\l}aw, Poland}
\email{Ba\~nuelos: banuelos@math.purdue.edu}
\email{Kulczycki: Tadeusz.Kulczycki@pwr.wroc.pl}
\email{Siudeja: siudeja@math.purdue.edu}
\begin{document}

\sloppy
\footnotetext{
\emph{The first and third authors were supported in part by NSF grant
\#  0603701-DMS.}\\
 \emph{The second author was supported in part by KBN Grant 1 P03A 020 28}\\ 
\emph{2000 Mathematics Subject Classification:} Primary 31B05, 60J45.\\
\emph{Key words and phrases:} Neumann Heat kernels, 
reflected Brownian motion, random walks, Bessel processes.}
\begin{abstract}
We prove that the diagonal of the transition probabilities for the 
$d$--dimensional Bessel processes on $(0, 1]$, reflected at 
$1$,  which we denote by $p_R^N(t, r,r)$,  is an increasing function of $r$ for $d>2$ and that this is false for $d=2$. 
\end{abstract}

\maketitle
\section{Introduction}

The following conjecture of Richard Laugesen and Carlo Morpurgo arose, as communicated  to us
 by R. Laugesen, in connection with their work in \cite{Lamo} on conformal extremals  of  zeta
functions of eigenvalues  under Neumann boundary conditions.  While this may be the first time the conjecture appears in print, the problem seems to be well--known. 

\begin{conjecture}
\label{conj1}
Let $\bB$ be the unit ball in $\bR^d$, $d\geq 2$, and 
let 
$p^N_{\bB}(t, x, y)$ be the heat kernel for the Laplacian in $\bB$ 
with Neumann boundary conditions.  Equivalently,  $p^N_{\bB}(t, x, y)$ 
gives the transition probabilities  for the Brownian motion in 
$\bB$ with normal reflection on the boundary.    Fix $t>0$.  The (radial) function 
$p^N_{\bB}(t, x, x)$  increases  as $|x|$ increases  to 1.  That is, for all $t>0$, 
\begin{equation} p^N_{\bB}(t, x_1, x_1)< p^N_{\bB}(t, x_2, x_2),
\end{equation} 
whenever  $0\leq \left| x_1\right| <\left| x_2\right|\leq 1$. 
\end{conjecture}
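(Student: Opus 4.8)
\medskip
\noindent\textbf{Outline of a possible approach.}
The natural first reduction uses the rotational symmetry of $\bB$: the diagonal $x\mapsto p^N_{\bB}(t,x,x)$ is radial, so write $p^N_{\bB}(t,x,x)=h(t,|x|)$, the claim being that $r\mapsto h(t,r)$ increases on $[0,1]$. Expanding $p^N_{\bB}$ in a complete orthonormal family of Neumann eigenfunctions $\varphi=\phi_{n,\ell}(|x|)\,Y_{\ell,m}(x/|x|)$ — the $\phi_{n,\ell}$ orthonormal in $L^2((0,1),r^{d-1}\,dr)$, the $Y_{\ell,m}$ an orthonormal basis of degree-$\ell$ spherical harmonics — and using the addition theorem $\sum_m Y_{\ell,m}(\theta)^2=N(\ell,d)/|S^{d-1}|$, the diagonal collapses to a single sum over angular modes:
\[
h(t,r)=\frac{1}{|S^{d-1}|}\sum_{\ell\ge0}N(\ell,d)\,D_\ell(t,r),\qquad
D_\ell(t,r):=\sum_n e^{-\lambda_{n,\ell}t}\,\phi_{n,\ell}(r)^2 .
\]
Here $D_\ell(t,r)$ is the on-diagonal heat kernel, with respect to $r^{d-1}\,dr$, of $\partial_r^2+\tfrac{d-1}{r}\partial_r-\ell(\ell+d-2)r^{-2}$ on $(0,1]$ with Neumann condition at $r=1$ — that is, of the reflected Bessel generator in dimension $d$ perturbed by the nonnegative \emph{decreasing} inverse-square potential $\ell(\ell+d-2)r^{-2}$. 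For $\ell=0$ the potential vanishes and $r^{d-1}D_0(t,r)$ is, up to a normalizing constant, exactly the transition density $p_R^N(t,r,r)$ of the abstract, so the $d>2$ versus $d=2$ dichotomy already resides in the $\ell=0$ mode.

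A warning that shapes everything to follow: one \emph{cannot} succeed by proving each $D_\ell(\cdot,r)$ increasing, for this is false. The free Bessel diagonal $D_\ell^{\mathrm{free}}(t,r)\propto t^{-1}r^{-(d-2)}I_{\nu_\ell}(r^2/t)e^{-r^2/t}$, with $\nu_\ell=\ell+\tfrac{d-2}{2}$, is strictly decreasing in $r$ — already for $d=3$, $\ell=0$ it is the elementary function $\propto r^{-2}\big(1-e^{-2r^2/t}\big)$, decreasing on $(0,\infty)$ — and yet $\frac{1}{|S^{d-1}|}\sum_{\ell}N(\ell,d)\,D_\ell^{\mathrm{free}}\equiv(4\pi t)^{-d/2}$, a constant, so in the free case the decreasing low modes are cancelled precisely by the increasing high modes. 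The conjecture is therefore entirely about the reflection at $r=1$: this perturbation alone must push the \emph{sum} upward, overcoming the decreasing low modes — a delicate, manifestly dimension-sensitive positivity. Accordingly the plan would be: (a) split off the free kernel, $h(t,r)=(4\pi t)^{-d/2}+w(t,r)$, and try to show the reflection correction $w$ is increasing in $r$ via its boundary-potential (Duhamel) representation, whose density on $\partial\bB$ is positive and whose contribution grows as $x$ approaches the boundary; and/or (b) work mode by mode — expand each $D_\ell$ in its Fourier--Bessel (for $\ell=0$) or Dini (for $\ell\ge1$) series, use $\tfrac{d}{dr}\big(r^{-\nu}J_\nu(cr)\big)=-c\,r^{-\nu}J_{\nu+1}(cr)$ and Lommel cross-product integrals to rewrite $\partial_r h(t,r)$ as an explicit Bessel series, and prove it nonnegative; and/or (c) discretize — approximate reflected Brownian motion in $\bB$ by a reflected random walk on a fine lattice, establish diagonal monotonicity for the walk by a combinatorial reflection/path-counting argument, and pass to the scaling limit (the appearance of ``random walks'' among this paper's key words suggests a discretization of this kind plays a role). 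For the one-dimensional case $p_R^N(t,r,r)=r^{d-1}D_0(t,r)$ with $d>2$, the theorem of the abstract carries out a version of step (b), and its proof is the prototype of what is needed in the full sum.

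The main obstacle is the uniform-in-$t$ positivity of the Bessel (Dini) series produced in (b). Already for the one-dimensional surrogate the model obstruction is a Bessel-ratio inequality: for the \emph{free} reflected Bessel diagonal, $\partial_r\big(r^{d-1}D_0^{\mathrm{free}}(t,r)\big)\ge0$ is equivalent to $I_{\nu+1}(u)/I_\nu(u)\ge1-\tfrac{2\nu+1}{2u}$ with $\nu=\tfrac{d-2}{2}$ and $u=r^2/t$, which holds for $\nu>0$ (for $\nu=\tfrac12$ it reduces to $\coth u\ge1$) but \emph{reverses} as $u\to\infty$ once $\nu=0$, i.e.\ once $d=2$; the abstract's work is to show that adjoining the reflecting boundary does not destroy this for $d>2$. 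For the full ball the corresponding positivity must in addition absorb the non-monotone low modes, so I would expect $d\ge3$ to be within reach of refinements of (b) or (c) once the prototype inequality is pushed through in all orders, while the disk, $d=2$, remains the genuine open end — exactly as the abstract's negative result for the radial reduction foreshadows.
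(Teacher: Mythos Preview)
The statement you were asked to prove is labeled a \emph{Conjecture} in the paper, and it remains one: the paper contains no proof of it, and indeed remarks explicitly that the Laugesen--Morpurgo conjecture ``is open even for the unit disk in the plane.'' What the paper actually proves is the strictly weaker one-dimensional result (Theorem~1.2) that the diagonal of the transition density of the \emph{radial part} --- the reflected $d$-dimensional Bessel process on $(0,1]$ --- is increasing for $d>2$ and fails to be for $d=2$. In your notation this is the monotonicity of $r\mapsto r^{d-1}D_0(t,r)$, i.e.\ a statement about the $\ell=0$ mode alone, and with respect to Lebesgue measure rather than $r^{d-1}\,dr$. The full conjecture, which as you correctly stress requires controlling the entire sum $\sum_\ell N(\ell,d)\,D_\ell(t,r)$ and not any single mode, is left open.

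Your document is correctly framed as an outline rather than a proof, so there is no gap to identify in a comparison with a nonexistent argument in the paper. It is worth recording, though, that your approach (c) --- discretize by a reflected random walk, establish diagonal monotonicity for the walk by a combinatorial path-shifting argument, then pass to the scaling limit --- is exactly the method the paper uses to prove its Bessel-process theorem: the walk is a birth--death chain on $\{\lceil (d-1)/2\rceil,\dots,N\}$ with a ``nondecreasing loop property,'' monotonicity of $p_N(n,m,m)$ in $m$ is obtained by an injection on paths, and the limit is identified via the Stroock--Varadhan submartingale characterization of reflected Brownian motion. Your diagnosis that the individual $D_\ell$ are not monotone, that the free contributions sum to a constant, and that the conjecture is therefore entirely a statement about the boundary correction, is also on target and is precisely why the paper's radial theorem does not already settle Conjecture~1.1.
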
  

 Of course, the same conjecture makes sense for $d=1$. 
 For this, see Remark \ref{onedim0}  in \S5.

 We should observe here that for the Dirichlet heat kernel in
$\bB$\,, the opposite inequality is true.  That is, the diagonal of
the Dirichlet heat kernel decreases as the point moves toward
the boundary (see Proposition \ref{dirichlet1} in \S5 below).

The conjecture is closely related to the {\it hot--spots
Conjecture} of Jeff Rauch which asserts that the maxima and
minima of any eigenfunction $\varphi _1$ corresponding to the smallest positive 
Neumann eigenvalue $\mu_1$ of a convex planar domain are attained at
the boundary, and only at the boundary, of the domain.
Indeed, if we denote the volume of the unit ball $\bB$ in
$\bR^d$ by $\omega_d$, the  eigenfunction expansion of the heat kernels 
gives that 
\begin{equation}
p^N_{\bB}(t, x, x) \approx \frac 1{\omega_d} +e^{-\mu _1 t}|\varphi _1(x)|^2,
\end{equation}
and this is uniform in $x$ for $t$ large (see \cite{smits}).  We refer the reader to \cite{BaBu}, \cite{Bu},
\cite{BaPanPas}, and references therein, for more on the {\it
hot--spots} conjecture and for the use of heat kernel expansions
and transition probabilities for that problem. Of course, for
the unit ball the {\it hot--spots} conjecture follows easily
from the explicit expression of $\varphi_1$ as a 
Bessel function.  However, a more general 
Laugesen-Morpurgo Conjecture can be stated 
where the connection to  the {\it hot--spots} conjecture is more
meaningful, see Conjecture \ref{conj2} below. Surprisingly the {\it hot--spots} conjecture is open
even for an arbitrary triangle in the plane. But perhaps even
more surprising is the fact that the Laugesen--Morpurgo
conjecture   is open even for the unit disk in the plane. 

The Neumann heat kernel $p^N_{\bB}(t, x, y)$ gives the
transition probabilities for the Brownian motion reflected on
the boundary of the ball and hence the use of probability for this problem 
(just as in the case of the {\it hot--spots} conjecture) is very natural. 
The Brownian motion in the ball has a
skew symmetric decomposition in terms of a Bessel processes
(the radial part) and spherical Brownian motion running with
a clock that depends on the Bessel processes (see for example \cite{BaSm}). That is,
let $W_t^{\bB}$ be reflected $d$--dimensional Brownian
motion (RBM) in the ball $\bB$ and let $R_t$ be the $d$--dimensional
Bessel process in the interval $I=(0,1]$ reflected at $1$. Then
$R_t$ is the radial part of $W_t^{\bB}$. That is,
$R_t=|W_t^{\bB}|$. Let $p^{R}_I(t, r, \rho)$ be the transition
probabilities for $R_t$ in the interval $I=(0, 1]$. We will
often refer to this as the heat kernel for $|W_t^{\bB}|$. 
The main result of this paper is the following 

\begin{thm}\label{main}  Suppose $d>2$. Fix $t>0$.  
The function $p^{R}_I(t, r, r)$ is increasing in $r$.  That is, 
\begin{equation}\label{radial1}
p^{R}_I(t, r_1, r_1)<p^{R}_I(t, r_2, r_2)
\end{equation}
for $0<r_1<r_2\leq 1$.  This monotonicity property  fails  if  $d=2$.
\end{thm}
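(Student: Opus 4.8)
The plan is to work directly with the transition density of the reflected $d$-dimensional Bessel process on $I=(0,1]$. Recall that the free $d$-dimensional Bessel semigroup on $(0,\infty)$ has an explicit transition density given in terms of the modified Bessel function $I_{\nu}$ with $\nu = d/2-1$, namely $p^{(d)}(t,r,\rho) = \frac{\rho}{t}\left(\frac{\rho}{r}\right)^{\nu}\exp\!\left(-\frac{r^2+\rho^2}{2t}\right)I_{\nu}\!\left(\frac{r\rho}{t}\right)$ against the natural speed measure $\rho^{d-1}\,d\rho$. To build the process reflected at $1$ we should use the standard Neumann reflection, which via the reflection principle for the radial coordinate can be written as an image sum or, more usefully, via its eigenfunction expansion $p_I^R(t,r,\rho)=\sum_{k\ge 0} e^{-\mu_k t}\,\varphi_k(r)\varphi_k(\rho)$ in $L^2((0,1],\rho^{d-1}d\rho)$, where the $\varphi_k$ solve the Bessel ODE with a Neumann condition at $r=1$ and the appropriate (entrance, for $d>2$) behavior at $0$. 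The quantity to analyze is $f(r):=p_I^R(t,r,r)=\sum_k e^{-\mu_k t}\varphi_k(r)^2$, and we want $f'(r)>0$ on $(0,1)$.

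The key idea I would pursue is a probabilistic monotonicity/coupling argument rather than brute-force ODE estimates on the eigenfunction series. First I would establish a clean representation: write $p_I^R(t,r,r)$ in terms of the \emph{absorbed} (Dirichlet at $1$) Bessel heat kernel plus a correction coming from paths that hit the boundary, using the strong Markov property at the first hitting time $\tau_1$ of $\{1\}$. Then I would try to show that as $r$ increases the ``loss'' of probability mass is more than compensated by the reflection. Concretely, I expect the cleanest route is to differentiate the heat equation in the space variable: the function $u(t,r)=\partial_r p_I^R(t,r,r)$ — or more precisely the spatial derivative of the kernel along the diagonal — itself satisfies a parabolic equation, and one wants to show it stays positive. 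The boundary term at $r=1$ is where the Neumann condition enters and should produce the right sign; the delicate point is the behavior near $r=0$. For $d>2$ the point $0$ is polar/inaccessible for the Bessel process, the speed measure $\rho^{d-1}$ vanishes there, and $\varphi_k(r)$ behaves like a constant (entrance boundary), which is exactly what makes $f'(0^+)\ge 0$ plausible; for $d=2$ the logarithmic behavior at $0$ destroys this, which is the mechanism behind the claimed failure. I would make the $d=2$ counterexample concrete by exhibiting, for small $t$, that $f'(r)<0$ near $r=0$ using the small-time asymptotics of the kernel, where the leading correction term carries a negative sign coming from the curvature/drift term $\frac{d-1}{2r}$ of the Bessel generator evaluated against the logarithmic ground behavior.

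In more detail, the ordered list of steps I would carry out is as follows. \emph{Step 1:} Write down the explicit reflected Bessel heat kernel, both as an eigenfunction series and via a Hankel-transform / image representation off the free kernel $p^{(d)}$, and record the small-$t$ expansion of $p_I^R(t,r,r)$ in the interior and near each boundary point. \emph{Step 2:} For the positive result ($d>2$), set $g(r)=p_I^R(t,r,r)$ and show $g$ has no interior minimum: argue that a critical point of $g$ in $(0,1)$ would, via the maximum principle applied to the kernel as a function of the running-point variable together with the Neumann condition at $1$, force $g$ to be constant — or, alternatively, directly verify $g' > 0$ by showing $\partial_r p_I^R(t,r,\rho)\big|_{\rho=r}>0$, splitting off the diagonal singularity (which is symmetric and contributes $0$ to the odd-order derivative) and controlling the smooth remainder. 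The monotonicity of the free Bessel kernel's diagonal in the radial variable (which holds precisely because $d>2$, by a direct computation with $I_\nu$) should serve as the base case, and the reflection at $1$ should only help. \emph{Step 3:} For $d=2$, take $t\to 0$ and show that near $r=0$ the diagonal behaves like $\frac{1}{2\pi t}\left(1 + c\, r^2/t + \cdots\right)$ only after accounting for a boundary-at-$0$ correction whose sign is negative, hence $g$ is strictly decreasing on a small interval $(0,\delta)$ for $t$ small, contradicting monotonicity; alternatively exhibit it numerically-analytically from the first two eigenfunctions.

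The main obstacle I anticipate is Step 2: turning the intuition ``reflection at $1$ only helps and $d>2$ makes the center a repelling-enough endpoint'' into a rigorous inequality. The eigenfunction series $\sum e^{-\mu_k t}\varphi_k(r)^2$ is not termwise monotone — individual $\varphi_k^2$ oscillate — so one cannot argue term by term, and the PDE/maximum-principle approach has to contend with the fact that $r=0$ is a singular endpoint of the generator (the coefficient $\frac{d-1}{2r}$ blows up) rather than a genuine Neumann boundary, so standard boundary-point lemmas do not apply there without care about the weight $r^{d-1}$. I expect the resolution is to pass to the $d$-dimensional picture: lift back to $p^N_{\bB}(t,x,x)$ via $p_I^R(t,r,r)=\omega_{d-1}\,r^{d-1}\cdot(\text{something})$? — no; rather, exploit that $p^{(d)}_I(t,r,\rho)$ is, up to the known Jacobian, the \emph{spherical average} of a genuinely $d$-dimensional Neumann heat kernel on the ball, so that the singular endpoint $0$ becomes an ordinary interior point in $\bR^d$ where smoothness and a gradient estimate are automatic, and the radial monotonicity reduces to a statement about the $d$-dimensional kernel for which a reflection-coupling or Gaussian domination argument (valid only when $d>2$, since the recurrence of $2$-dimensional Brownian motion is what breaks it) can be made to work.
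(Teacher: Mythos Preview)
Your proposal outlines several plausible strategies but does not carry any of them through, and the obstacle you yourself flag in Step~2 is real and unresolved. The maximum-principle/Hopf-lemma idea at the singular endpoint $r=0$ is genuinely delicate, and your proposed resolution --- lifting back to the $d$-dimensional Neumann heat kernel on the ball --- is circular: monotonicity of $p^N_{\bB}(t,x,x)$ in $|x|$ is precisely the Laugesen--Morpurgo Conjecture~\ref{conj1}, which remains open and is strictly harder than the radial statement being proved here. The eigenfunction series is, as you note, not termwise monotone, and neither the diagonal-splitting idea nor the vague ``reflection-coupling or Gaussian domination argument'' is made concrete; no such coupling is known to yield the diagonal inequality even for the ball, so there is no reason to expect it to give the radial version either.

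The paper avoids all of these analytic difficulties by discretizing. A random walk $Y^N_n$ on integer points with transition probabilities $p(1,m,m\pm 1)=\tfrac12\pm\tfrac{d-1}{4m}$, reflected at $N$, is introduced as an approximation to the reflected Bessel process. The key lemma is purely combinatorial: the loop weight $p(1,m,m+1)\,p(1,m+1,m)=\tfrac14-\tfrac{(d-1)(d-3)}{16m(m+1)}$ is nondecreasing in $m$ exactly when $d\ge 3$, and this ``nondecreasing loop property'' allows one to build an injective, probability-increasing map from $n$-step paths $m\to m$ to $n$-step paths $(m{+}1)\to(m{+}1)$, proving $p_N(n,m,m)$ is increasing in $m$. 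Weak convergence of $\tfrac1N Y^N_{[N^2t]}$ to $R_t$ then transfers the inequality, and real analyticity of the diagonal upgrades nondecreasing to strictly increasing. For $d=2$ the paper does use small-$t$ comparison with the free kernel, but the mechanism is not a ``boundary-at-$0$ correction'': the \emph{free} 2-dimensional Bessel diagonal $q(t,r,r)=t^{-1}re^{-r^2/t}I_0(r^2/t)$ is already non-monotone in $r$ (one checks $\Phi_0(r)=re^{-r^2}I_0(r^2)$ at $r=1$ and $r=2$), and Neumann heat kernel comparison results of Carmona--Zheng transport this failure to $p_I^R$ for small $t$.
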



It is known (see \cite{RY}, page 415) that the transition probabilities 
(heat kernel) $q(t,r,\rho)$ for the free
$2$--dimensional Bessel process is given by
\begin{gather}\label{2ddens}
  q(t,r,\rho)=t^{-1} \rho e^{-{r^2+\rho^2\over 2t}}I_0\left( r\rho\over t \right),
\end{gather}
where $I_0$ is the modified Bessel function of order $0$. The function $q(t,r,r)$
is not increasing.  In fact,  as $t\to 0$ this function has a ``tall bump" moving 
toward $0$.
Since the reflected process is equal to the free process before the first reflection,
it seems reasonable to expect that $p^{R}_I(t, r, r)$ is not increasing for small values of $t$ when $d=2$.  A rigorous proof of this fact will be given below. 
On the other hand, the function $q(t,r,r)$ is non-decreasing 
for 
$d$--dimensional Bessel processes when $d\geq 3$ and therefore 
one may expect the monotonicity property to hold for 
$p^{R}_I(t, r, r)$ for such $d$ and  Theorem \ref{main} shows that 
this is indeed the case. 

Our strategy  in this paper is to replace the reflected
Bessel process by a random walk and obtain the result for
this  walk. We then show that under the appropriate scale 
the random walk converges to the reflected Bessel process.  

The
paper is organized as follows. In \S 2 we introduce the random
walks and prove the analogue of Theorem \ref{main} for these.
 \S 3 contains the
proof of the convergence of these random walks to their
continuous counterparts and \S 4 gives the proof of  theorem \ref{main}. 
Finally, the last section contains some conjectures related to our result and illuminates the connection between the Rauch {\it hot--spots} Conjecture and the Laugesen--Morpurgo Conjecture further.

\section{The random walk} 
In this section we introduce the random walk which we will use later to approximate
the reflected Bessel process.
We will use the following notation. For any $x\in\Rd$, $|x|$ denote the length of the vector $x$.
For $d\geq3$ and $x\not = 0$,  set $$U(x)={|x|+1\over|x|}x$$ and $$D(x)={|x|-1\over|x|}x.$$  Consider the two sets
\begin{equation*} 
   C(x)=\left\{ y\in\Rd: |y|=|x|+1\mbox{ and }y-D(x)\bot x\right\}, 
  \end{equation*}
  and 
  \begin{equation*}
 S=\left\{ y\in\Rd: |y|\in \N\mbox{ and }|y|\geq {d\over2}-1\right\}.
\end{equation*}
Note that $C(x)$ is
a $(d-2)$--dimensional sphere with center at $D(x)$ and 
orthogonal to $x$. We now define two random walks as follows.  
\begin{defn}
  Let $X_n$ be a random walk on $S$ with the following transition probabilities  
  \begin{enumerate}
    \item $p(1,x,U(x))=\frac12$,\\
    \item $p(1,x,D(x))=\frac12-{d-1\over 4|x|}$, for $|x|\geq {d-1\over2}$,\\
    \item $p(1,x,A)={d-1\over 4|x|}\mu_x(A)$, where $A\subset C(x)$ and $\mu_x$ a
      uniform probability measure on $C(x)$, for $|x|\geq {d-1\over 2}$,\\  
    \item $p(1,x,A)=\frac12\mu_x(A)$, where $A\subset C(x)$ and $\mu_x$ a
      uniform probability measure on $C(x)$, for even $d$ and $|x|={d-2\over2}$.
  \end{enumerate}
\end{defn}

Observe that if $d=2k+1$, then $|x|\geq k$ and $p(1,x,D(x))=0$ if $|x|=k$.
If $d=2k$, then $|x|\geq k-1$ and $p(1,x,D(x))={1\over 4k}$ if $|x|=k$. 
Hence, we need the 
additional points $|x|=k-1$ as in  the last 
case in the definition.

We will also be concerned with the radial part of the above random walk. 
\begin{defn}
\label{defY_n}
  Let $Y_n=|X_n|$. Then  $Y_n$
 has the following transition 
  probabilities
  \begin{enumerate}
    \item $p(1,m,m+1)=\frac12+{d-1\over 4m}$, for $m\geq {d-1\over2}$,\\
    \item $p(1,m,m+1)=1$, for even $d$ and $m={d\over2}-1$,\\
    \item $p(1,m,m-1)=\frac12-{d-1\over 4m}$, for $m\geq {d-1\over2}$.
  \end{enumerate}
\end{defn}

The above are the ``free" random walks.  For our purpose,  we define the reflected versions of these walks.

\begin{defn}
\label{defY_n^N}
  Let $X^N_n$ be the random walk on $S\cap \left\{ |x|\leq N \right\}$ with
  transition probabilities $p_N(1,x,y)=p(1,x,y)$ for $|x|<N$ and
  \begin{enumerate}
    \item $p_N(1,x,x)=\frac12+{d-1\over 4|x|}$ for $|x|=N$,\\
    \item $p_N(1,x,D(x))=\frac12 -{d-1\over 4|x|}$ for $|x|=N$
  \end{enumerate}
  and denote by $Y^N_n=|X^N_n|$ its radial part. 
\end{defn}

We will use $X^N_n$ and $Y^N_n$ to approximate 
$W_t^{\bB}$ and $R_t=|W_t^{\bB}|$, respectively. 
We will now show that $p_N(n,m,m)$, the transition
probabilities for the random walk $Y^N_n$, are increasing
with $m$ for any fixed $n$. The following property will be
useful for this purpose. 

\begin{defn}
  We say, that a random walk has the nondecreasing loop property if for any $m$ its transition probabilities have the property that 
  $$p(1,m,m+1)p(1,m+1,m)$$ is nondecreasing with $m$. In case of a reflected walk we also require
  that $p_N(1,N,N)^2$ (the loop at the reflection point) be larger than  or equal to 
  $$p(1,m,m+1)p(1,m+1,m),$$ for every $m<N$.
\end{defn}

\begin{lem}\label{loop}  The random walks 
  $Y_n$ and $Y^N_n$ have the nondecreasing loop property.
\end{lem}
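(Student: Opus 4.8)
The plan is to compute the product $p(1,m,m+1)\,p(1,m+1,m)$ explicitly from the transition probabilities in Definitions \ref{defY_n} and \ref{defY_n^N}, and then verify monotonicity in $m$ directly by examining the resulting rational function of $m$. For the free walk $Y_n$ on the range $m\geq (d-1)/2$ we have $p(1,m,m+1)=\frac12+\frac{d-1}{4m}$ and $p(1,m+1,m)=\frac12-\frac{d-1}{4(m+1)}$, so the loop product is
\begin{equation}\label{eq:loopprod}
f(m):=\left(\frac12+\frac{d-1}{4m}\right)\left(\frac12-\frac{d-1}{4(m+1)}\right).
\end{equation}
First I would expand this and show $f(m+1)-f(m)\geq 0$; after clearing denominators this reduces to a polynomial inequality in $m$ (with parameter $d$), which I expect to hold on the relevant range precisely because $d>2$ makes $d-1>1$. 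One should get something like: the difference has the sign of an expression that is manifestly nonnegative when $m\geq (d-1)/2$. I would also check the boundary junction cases: when $d=2k$ is even and $m=d/2-1=k-1$ we instead have $p(1,m,m+1)=1$ (case (2) of Definition \ref{defY_n}), and one must confirm that the single affected product $p(1,k-1,k)\,p(1,k,k-1)=1\cdot\left(\frac12-\frac{d-1}{4k}\right)$ still fits below $f(k)$; similarly when $d=2k+1$ is odd the smallest state $m=k$ has $p(1,m,m-1)=0$, which only affects products indexed at $m=k-1$ (nonexistent) so nothing needs checking there, but I'd verify $f(k)\le f(k+1)$ as the base of the monotone chain.

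For the reflected walk $Y^N_n$, the interior transition probabilities agree with those of $Y_n$ for $m<N$, so the nondecreasing loop property on $m<N$ is exactly what was just proved; the only new requirement is that $p_N(1,N,N)^2\geq p(1,m,m+1)\,p(1,m+1,m)$ for all $m<N$. Since $f$ is nondecreasing, it suffices to show $p_N(1,N,N)^2\geq f(N-1)$, i.e.
\begin{equation}
\left(\frac12+\frac{d-1}{4N}\right)^{2}\geq\left(\frac12+\frac{d-1}{4(N-1)}\right)\left(\frac12-\frac{d-1}{4N}\right).
\end{equation}
The right side is a product of two numbers whose arithmetic mean is at most $\frac12$ (one exceeds $\frac12$ by $\frac{d-1}{4(N-1)}$, the other falls short by $\frac{d-1}{4N}$, and $\frac{d-1}{4(N-1)}>\frac{d-1}{4N}$), while the left side is the square of $\frac12+\frac{d-1}{4N}>\frac12$; an AM–GM / direct-expansion argument then closes it. I would again separately handle the even-$d$ junction, where for $N-1=d/2-1$ the factor $p(1,N-1,N)$ equals $1$ rather than $\frac12+\frac{d-1}{4(N-1)}$, giving $f(N-1)=\frac12-\frac{d-1}{4N}<\frac14\leq p_N(1,N,N)^2$, which is immediate.

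The main obstacle is not conceptual but bookkeeping: one must be careful that all the piecewise boundary cases in Definitions \ref{defY_n}–\ref{defY_n^N} (the odd/even split, the reflection point $|x|=N$, the extra even-$d$ point $|x|=d/2-1$) are each checked against the definition of the nondecreasing loop property, since the property is asserted for \emph{every} $m$. I expect the polynomial inequality coming from $f(m+1)\ge f(m)$ to be the only place where the hypothesis $d>2$ is genuinely used, and I would flag that dependence explicitly, since Theorem \ref{main} fails for $d=2$ and the failure must ultimately be traceable to some monotonicity being lost here (indeed for $d=2$ we have $d-1=1$ and the walk degenerates).
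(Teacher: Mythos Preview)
Your approach is essentially the paper's: compute the loop product explicitly, check monotonicity in $m$, handle the even-$d$ boundary state separately, and then bound the reflection loop from below. Two remarks are in order.

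First, the paper streamlines your step~(1) by simplifying \eqref{eq:loopprod} to the closed form
\[
f(m)=\frac14-\frac{(d-1)(d-3)}{16(m^2+m)},
\]
from which the nondecreasing property is immediate (and one sees it is constant at $d=3$ and would reverse for $d=2$, since then $(d-1)(d-3)<0$). This also makes the even-$d$ junction check a one-line comparison of $\tfrac{1}{2d}$ with $f(d/2)$.

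Second, your argument for the reflected case contains an error: you claim that the two factors on the right of your displayed inequality have arithmetic mean \emph{at most} $\tfrac12$, but in fact the excess $\tfrac{d-1}{4(N-1)}$ is larger than the deficit $\tfrac{d-1}{4N}$, so the mean is \emph{at least} $\tfrac12$, and the AM--GM route you sketch does not close the inequality as written. The inequality itself is true and can be salvaged by the direct expansion you allude to, but the paper avoids all of this: from the closed form above one sees $f(m)\le\tfrac14$ for every $m$ when $d\ge3$, while $p_N(1,N,N)^2=\bigl(\tfrac12+\tfrac{d-1}{4N}\bigr)^2>\tfrac14$, so the reflection-loop condition holds with no further computation.
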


\begin{proof}  In the case of  $Y_n$ we have 
for $m \ge (d - 1)/2$
  \begin{gather}
    \begin{split}
      p(1,m,m+1)p(1,m+1,m)&=
      \left(\frac12+{d-1\over4m} \right)\left(\frac12-{d-1\over4(m+1)}  \right)
      \\&=
      {(2m+d-1)(2m+2-d+1)\over16m(m+1)}
      \\&=
      {4(m^2+m)-(d-1)(d-3)\over16(m^2+m)}
      \\&=
      \frac14-{(d-1)(d-3)\over 16(m^2+m)}.
    \end{split}
  \end{gather}
Thus the left hand side  is 
nondecreasing for $m\geq {d-1\over2}$. When $d=3$ this quantity is constant.   

  Next we take $m={d\over2}-1$ for $d$ even. In this case
  \begin{gather}
    \begin{split}
      p\left(1,{d\over2}-1,{d\over2}\right)p\left(1,{d\over2},{d\over2}-1\right)=
      {1\over 2d}.
    \end{split}
  \end{gather}
  From the general case
  \begin{eqnarray*}
      p\left(1,{d\over2},{d\over2}+1\right)p\left(1,{d\over2}+1,{d\over2}\right)&=&
      \frac14-{(d-1)(d-3)\over4d^2+8d}\\
      &=&
      {6d-3\over 4d^2+8d}
      \\
      &>&
      {2d+4\over 4d^2+8d}={1\over2d},
   \end{eqnarray*}
   since $d\geq4$. This completes  the 
   proof that $Y_n$ has the non-decreasing loop property. 

   In the case of $Y^N_n$ we are left with 
   reflection point loop, i.e. $p(1,N,N)^2$. But this is larger  then 
   $1/4$,  hence we have a nondecreasing loop property for $Y^N_n$.
\end{proof}

\begin{prop}  Fix $n$.  Then $p_N(n , m, m)$ is increasing in $m$. 
\end{prop}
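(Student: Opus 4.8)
The plan is to reformulate the claim as a statement about a symmetric Markov chain and then exploit the nondecreasing loop property from Lemma~\ref{loop}. First I would introduce the reversing measure $\pi$ for the walk $Y^N_n$ on the state space $\{k,k+1,\dots,N\}$ (where $k=\lceil (d-1)/2\rceil$, with the extra point $d/2-1$ adjoined when $d$ is even). Since the walk is a nearest--neighbor birth--death chain, detailed balance forces $\pi(m)/\pi(m+1)=p(1,m+1,m)/p(1,m,m+1)$, which determines $\pi$ up to normalization. The transition density relative to this measure is $\tilde p_N(n,m,m')=p_N(n,m,m')/\pi(m')$, and I would first verify that this $\tilde p_N$ is symmetric in $(m,m')$; the diagonal we care about is $p_N(n,m,m)=\pi(m)\,\tilde p_N(n,m,m)$. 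So the target inequality splits into understanding how $\pi(m)$ grows and how $\tilde p_N(n,m,m)$ behaves.

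The key algebraic observation is that the one--step loop probabilities control everything. Write $a_m=p(1,m,m+1)p(1,m+1,m)$ for the ``loop weight'' across the edge $\{m,m+1\}$, and $b=p_N(1,N,N)^2$ for the reflection loop. In the reversible picture, a path from $m$ back to $m$ in $n$ steps decomposes into excursions across edges, and the contribution of each excursion is governed precisely by these products $a_m$ (the $\pi$ factors telescope along the way). I would make this precise by a path/combinatorial expansion of $p_N(n,m,m)$ as a sum over lattice paths of length $n$ from $m$ to $m$, grouping each such path's weight into a product of edge-crossing factors $a_j$ and self-loop factors, and then set up an injection from paths based at $m$ to paths based at $m+1$. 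Under this injection — roughly, ``shift the path up by one and splice in an extra up-down or an extra reflection loop to use up the slack'' — Lemma~\ref{loop} guarantees that each image path has weight at least that of its preimage, because every edge weight $a_j$ that appears gets replaced by $a_{j+1}\ge a_j$, and any leftover steps are absorbed into the reflection loop, whose weight $b\ge 1/4\ge a_j$ dominates. This yields $p_N(n,m,m)\le p_N(n,m+1,m+1)$, and strictness comes from the fact that at least one genuinely new path (or one strict inequality among the $a_j$, available since the walk is not degenerate near the reflecting end) enters the image.

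The main obstacle I anticipate is making the injection between paths based at $m$ and paths based at $m+1$ both well-defined and weight-increasing near the two ends of the state space. Near the reflecting boundary $N$ the shift-up idea must be replaced by inserting reflection loops, and one must check the bookkeeping of parity and path length matches up — this is exactly why the definition of the nondecreasing loop property explicitly demands $p_N(1,N,N)^2\ge a_m$ for all $m<N$. Near the lower end the walk has the special states $k$ and (for even $d$) $d/2-1$ where some transition probabilities degenerate to $0$ or $1$; I would handle these as boundary base cases, checking directly that a path confined near the bottom maps to one with no smaller weight. An alternative to the explicit path injection, which I would fall back on if the combinatorics get unwieldy, is an induction on $n$: assuming monotonicity of $p_N(n,\cdot,\cdot)$ on the diagonal, write $p_N(n+1,m,m)=\sum_{m'} p_N(1,m,m')p_N(n,m',m')p_N(1,m',m)/\,(\text{normalization})$ via reversibility, and compare the $m$ and $m+1$ versions term by term using the loop monotonicity together with a rearrangement/Chebyshev-type sum inequality; the delicate point there is again the boundary terms at $N$ and near the bottom.
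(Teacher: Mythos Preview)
Your overall instinct---an injection from length-$n$ paths based at $m$ to length-$n$ paths based at $m+1$, with the nondecreasing loop property supplying the weight comparison---is exactly the paper's strategy. But the part you flag as ``the main obstacle'' is indeed the whole difficulty, and your proposed mechanism for it does not work. You write ``shift the path up by one and splice in an extra up--down or an extra reflection loop to use up the slack,'' but there is no slack: both paths must have exactly $n$ steps, so nothing can be spliced in. The actual issue is that a path from $m$ that reaches $N$ cannot simply be translated upward, since it would leave the state space.

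The paper's injection handles this differently. If the original path $P_m$ never hits $N$, shift it up by one; the image path may hit $N$ but never \emph{stays} there (never uses the self-loop at $N$). If $P_m$ does hit $N$, let $k_1$ and $k_2$ be the first and last visits to $N$; shift up only the portion before $k_1$ and the portion after $k_2$, leaving the segment between $k_1$ and $k_2$ untouched. Because $l_{k_1-1}=N-1$ and $l_{k_1}=N$, the image path takes the self-loop $N\to N$ at step $k_1$ (and similarly at $k_2$). Injectivity follows because, in the image, the first and last times the path stays at $N$ recover $k_1$ and $k_2$, and the three pieces are then determined. The weight comparison uses that the unchanged middle contributes identically, while the shifted beginning and end (together with the two new self-loops replacing an up-step and a down-step) decompose into loops to which Lemma~\ref{loop} applies. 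This partial-shift idea is the missing ingredient in your proposal.

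Two smaller remarks. The detour through the reversing measure $\pi$ and the symmetrized kernel $\tilde p_N$ is unnecessary: the paper works directly with $p_N(n,m,m)$ and raw path probabilities, and the loop weights $a_m=p(1,m,m+1)p(1,m+1,m)$ already appear without any symmetrization because every return path uses each edge equally often in each direction. And your fallback induction formula is not right: Chapman--Kolmogorov gives $p_N(n+1,m,m)=\sum_{m'} p_N(1,m,m')\,p_N(n,m',m)$, which involves the off-diagonal entries $p_N(n,m',m)$, not $p_N(n,m',m')$, so an induction on the diagonal alone does not close.
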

\begin{proof} 
To prove this  we fix $n$ and consider each possible 
path from $m$ to $m$ in $n$ steps. The proof will be completed  if for each of them we can find a unique
path from $m+1$ to $m+1$ in $n$ steps that has a larger probability.
Towards this end, let $P_m=\{m=l_1,l_2,\cdots,l_{n-1},l_n=m\}$ be a path for $Y^N$. Let 
$k_1=\inf\{k:l_k=N\}$ and $k_2=\sup\{k:l_k=N\}$. 
If this path never touches $N$ then we can take the path
$\P_{m+1}=\{m+1=l_1+1,l_2+1,\cdots,l_n+1=m+1\}$. Since both paths start and end 
at the 
same point, they are (possibly after rearranging) a sequences of loops. 
By the nondecreasing loop property for $Y^N$ proved in Lemma \ref{loop},
the probability for the 
path $P_{m+1}$ is larger.

Up to now we have only used those paths starting at $m+1$ that do not remain at $N$.  
That is, those paths which move to $N-1$ immediately after hitting $N$. This is true,
since all the paths $P_m$ we considered above never touched $N$, their maximum
can be $N-1$ and the walk has to move to $N-2$ from there.

Suppose that $P_m$ hits $N$ at time $k_1<N$. Then $k_2<N$. 
Let $P_{m+1}=\{m+1=l_1+1,l_2+1,\cdots,l_{k_1-1}+1,l_{k_1},\cdots,l_{k_2},
l_{k_2+1}+1,\cdots,
l_n + 1 = m + 1\}$. The idea is to shift the parts of the path before
and after hitting $N$ for the first and last time. We have to show that such correspondence of 
the paths is one-to-one.

We have to look at the parts of $P_m$ before, after and in between the hitting times,
separately. First notice that for $P_{m+1}$ the step from $k_1-1$ to $k_1$ is the first
time the path remains at $N$. Similarly $k_2$ to $k_2+1$ is the last time the path
remains in $N$. Hence if two different paths $P_m$ have  different times $k_1$ or $k_2$,
then the shifted paths $P_{m+1}$ will also be different. Hence the only possible
paths with the same corresponding shifted paths must have the same hitting times
$k_1$ and $k_2$. 

Note that it is important in this proof that the walk cannot stay at any point
other then the reflection point $N$. If we allow $p(1,N-1,N-1)$ to be non-zero, then
the path $P_{m+1}$ obtained from the path $P_m$ that never touches $N$ 
may remain at the point $N$ even before the time $k_1$. This would invalidate the
above reasoning.

If two paths  $P_m$ are different at any shifted point (before $k_1$ or after $k_2$, 
then the same is true for the corresponding paths $P_{m+1}$.
Finally, if both $k_1$ and $k_2$ are the same for two different paths $P_m$ and they are the same
before $k_1$ and after $k_2$, then there must be a difference between $k_1$ and $k_2$.
But this part of those paths is not changed in $P_{m+1}$. Therefore the correspondence
between $P_m$ and $P_{m+1}$ is one-to-one. 

The last thing to check is that the probability of the corresponding $P_{m+1}$ path is larger.
Since the part between $k_1$ and $k_2$ is exactly the same, we can disregard it.
What is left is just a sequence of loops (after rearrangement). Hence by the nondecreasing 
loop property (Lemma \ref{loop}), this completes the proof.
\end{proof}

As a corollary to the above argument we  get
\begin{corollary}\label{trans}
  Fix $n$. For any $m'<m$,
  $p > 0$ and $m + p \le N$ we have $p_N(n,m,m')\leq p_N(n,m+p,m'+p)$.
\end{corollary}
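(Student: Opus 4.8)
The plan is to observe that the proof of the preceding proposition did not really use that the start and end points coincide; it only used that the two paths being compared have the \emph{same} length $n$, the \emph{same} number of up-steps and the \emph{same} number of down-steps, so that after rearrangement they decompose into matching loops whose probabilities are controlled by the nondecreasing loop property (Lemma \ref{loop}). So the first step is to set up the path-by-path injection exactly as before, but now from paths $P$ going from $m$ to $m'$ in $n$ steps to paths $Q$ going from $m+p$ to $m'+p$ in $n$ steps. Since $m'<m$, any path from $m$ to $m'$ uses strictly more down-steps than up-steps, but the \emph{shift} by $p$ changes neither the up-count nor the down-count, so the two families of paths are in natural bijection by the rigid translation $l_k\mapsto l_k+p$ on the portion of the path that stays below $N$.

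The second step is to handle the reflection level $N$ exactly as in the proposition. If $P$ never reaches $N$, then $\max_k l_k\le N-1$, so $\max_k(l_k+p)\le N-1+p\le N$ since $m+p\le N$ forces $p\le N-m\le N-1$... more carefully, one only needs the shifted path to be admissible, i.e. to stay in $S\cap\{|x|\le N\}$; since $m+p\le N$ and the shifted path never exceeds $N$ (its maximum is at most $(N-1)$ plus nothing, because we shift only the sub-path that lies strictly below the first hitting time of $N$), admissibility holds. If $P$ does hit $N$, with first/last hitting times $k_1\le k_2$, we again leave the middle segment $l_{k_1},\dots,l_{k_2}$ untouched and shift only the initial segment (which runs from $m+p$ down/up to $N$) and the final segment (which runs from $N$ down/up to $m'+p$) by $p$. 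The one-to-one-ness argument is verbatim that of the proposition: different $k_1$ or $k_2$ give different shifted paths because $k_1-1\to k_1$ is the first time $Q$ sits at $N$ and $k_2\to k_2+1$ is the last; equal $k_1,k_2$ and equal shifted portions force a difference in the unshifted middle portion, which is preserved.

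The third step is the probability comparison. Write $P$ (resp. $Q$) as a concatenation of a middle segment at level $N$ (possibly empty) together with the remaining up/down steps; the middle segments of $P$ and $Q$ are identical and contribute equal factors, and the remaining steps of $P$ pair off, after rearrangement, into loops $m_i\to m_i+1\to m_i$ whose $Q$-counterparts are loops $(m_i+p)\to(m_i+p+1)\to(m_i+p)$. By the nondecreasing loop property of $Y^N_n$ (Lemma \ref{loop}), $p(1,m_i,m_i+1)p(1,m_i+1,m_i)\le p(1,m_i+p,m_i+p+1)p(1,m_i+p+1,m_i+p)$, and for any loop touching the reflection point the reflected-walk clause of the loop property gives the same inequality; multiplying over all loops yields $\mathbf{P}(P)\le\mathbf{P}(Q)$. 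Summing over all $P$ and using the injection $P\mapsto Q$ gives $p_N(n,m,m')\le p_N(n,m+p,m'+p)$.

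I expect the only real subtlety — and it is a minor one — to be bookkeeping around the reflection level: verifying that the shifted initial and final segments stay within $S\cap\{|x|\le N\}$ (which is exactly where the hypothesis $m+p\le N$ is used) and that, as emphasized in the remark inside the proposition, the inability of the walk to rest anywhere except at $N$ is what keeps the injection well-defined. Everything else is an immediate re-run of the proposition's argument with "$m=m$" replaced by "$m'<m$" and a global shift by $p$.
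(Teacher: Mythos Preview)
Your injection and your handling of the reflection level are exactly what the paper does, but there is a genuine gap in Step~3. You write that ``the remaining steps of $P$ pair off, after rearrangement, into loops $m_i\to m_i+1\to m_i$.'' This is false precisely because $m'<m$: as you yourself note in Step~1, the path has $m-m'$ more down-steps than up-steps, so after pairing all the up-steps with down-steps into loops you are left with $m-m'$ unmatched down-steps. The nondecreasing loop property (Lemma~\ref{loop}) says nothing about these, and they are what distinguish this corollary from the proposition.

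The missing ingredient is the observation that the single down-step probability $p_N(1,k,k-1)=\tfrac12-\tfrac{d-1}{4k}$ is itself an increasing function of $k$. This is exactly what the paper's proof supplies: ``each path can be decomposed into loops plus additional transitions from $m$ to $m'$ (all of them toward $0$). But since $p_N(1,n,n-1)=\frac12-\frac{d-1}{4n}$ is increasing with $n$, the path shifted by $p$ will have larger probability.'' Once you add this one line, your argument is complete and matches the paper's.
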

The proof is almost identical. The only difference is that each path can be 
decomposed into loops plus additional transitions from $m$ to $m'$ (all of them toward $0$). 
But since $p_N(1,n,n-1)=\frac12-\frac{d-1}{4n}$ is increasing with $n$,
the path shifted by $p$ will have larger  probability.\qed

\section{Convergence}

\begin{prop}  The sequence $\{\frac{1}{N} X^N_{[N^2t]}\}$ converges weakly to the reflected Brownian motion $W^{\bB}_t$ as $N\to\infty$. 
\end{prop}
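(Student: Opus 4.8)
The plan is to establish the weak convergence via the standard martingale problem / tightness route for reflected diffusions, adapted to the skew-product structure already in play. First I would set $Z^N_t := \frac{1}{N} X^N_{[N^2 t]}$ and verify that $Z^N$ is a Markov process taking values in $\bar{\bB}$ (up to the $O(1/N)$ discrepancy at the origin and at the shells of radius less than $(d-1)/(2N)$, which is harmless in the limit). The reflected Brownian motion $W^{\bB}_t$ is the unique solution of the submartingale problem of Stroock--Varadhan (equivalently, it is characterized by the fact that for every $f \in C^2(\bar{\bB})$ with $\partial_\nu f \ge 0$ on $\partial \bB$, the process $f(W^{\bB}_t) - \frac{1}{2}\int_0^t \Delta f(W^{\bB}_s)\,ds$ is a submartingale). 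So it suffices to show (i) the laws of $\{Z^N\}$ are tight in $D([0,\infty), \bar{\bB})$, and (ii) every weak subsequential limit solves this submartingale problem.

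For tightness I would use Aldous's criterion: one needs uniform control of the modulus of continuity, which follows from a one-step moment bound. A direct computation from the transition probabilities in Definition \ref{defY_n^N} (and the spherical part, which is an isotropic displacement on $C(x)$) gives, for $|x| < N$,
\begin{gather*}
\E\bigl[X^N_{n+1} - X^N_n \mid X^N_n = x\bigr] = \frac{d-1}{2}\,\frac{x}{|x|^2} + O\!\left(\frac{1}{|x|^2}\right), \qquad
\E\bigl[|X^N_{n+1} - X^N_n|^2 \mid X^N_n = x\bigr] = 1,
\end{gather*}
so after the diffusive rescaling the drift is $\frac{d-1}{2r}\,\frac{x}{r}$ (the Bessel drift, i.e. $\frac12 \nabla \log$ of the volume density, which is exactly the generator $\frac12\Delta$ in radial form) and the diffusion coefficient is the identity; at $|x| = N$ the walk is pushed inward by one unit, which rescales to an instantaneous inward push of size $O(1/N) \to 0$, the hallmark of normal reflection. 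The higher moments are bounded since jumps are of size $1$, giving the Lindeberg condition and hence tightness.

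Next I would identify the limit. Fix $f \in C^3(\bar{\bB})$ with $\partial_\nu f \ge 0$ on $\partial \bB$. Apply a Taylor expansion to $f(Z^N_{(n+1)/N^2}) - f(Z^N_{n/N^2})$, take conditional expectations using the moment estimates above, and sum: the interior terms telescope to $\frac{1}{2}\int \Delta f(Z^N_s)\,ds$ plus an $o(1)$ error (here the $O(1/|x|^2)$ correction to the drift contributes a term of order $N^{-2}\cdot N^2 \cdot N^{-2} \to 0$ in total, and likewise the $C^3$ bound controls the third-order Taylor remainder). The boundary steps at $|x|=N$ each contribute $-\frac{1}{N}\partial_\nu f + o(1/N) \le o(1/N)$ to the increment by the Neumann sign condition, hence in aggregate a nonpositive (in the limit, zero or nonpositive) contribution. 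Passing to a subsequential limit $Z$ and using continuity of $f$, $\Delta f$ along with uniform integrability, one gets that $f(Z_t) - \frac12\int_0^t \Delta f(Z_s)\,ds$ is a submartingale. By Stroock--Varadhan uniqueness for the submartingale problem on the ball (a smooth bounded convex domain), $Z = W^{\bB}$ in law, and since the limit is the same along every subsequence, the full sequence converges weakly.

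The main obstacle I expect is the behavior near the origin $r = 0$, where the Bessel drift $\frac{d-1}{2r}$ blows up and the walk lives only on the discrete shells $|x| \ge (d-1)/2$, so $Z^N$ genuinely omits a neighborhood of $0$ of radius $\sim (d-1)/(2N)$. One must argue that this singular set is not visited in the limit in a way that affects the martingale problem — for $d > 2$ the point $0$ is polar for RBM in $\bB$ (the radial part is a Bessel$(d)$ process which a.s.\ never hits $0$), so the contribution of excursions near $0$ to the submartingale identity vanishes; making this rigorous requires either a localization argument (stopping before $Z^N$ enters a small ball around $0$ and letting the ball shrink) or an a priori estimate that $Z^N$ spends negligible time near $0$, both of which are standard but require care. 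A secondary, more technical point is verifying that the spherical component of the walk — a uniform jump on the $(d-2)$-sphere $C(x)$ — generates the correct Laplace--Beltrami part of $\Delta$ after rescaling; this is where writing $\Delta$ in polar coordinates $\Delta = \partial_{rr} + \frac{d-1}{r}\partial_r + \frac{1}{r^2}\Delta_{S^{d-1}}$ and matching term-by-term against the one-step moments does the job.
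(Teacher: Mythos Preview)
Your approach is essentially the paper's: tightness plus identification of the limit via the Stroock--Varadhan submartingale characterization, with the generator computed by one-step Taylor expansion. Two corrections and one genuine gap.

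First, the moment formula $\E[X^N_{n+1}-X^N_n\mid X^N_n=x]=\tfrac{d-1}{2}\tfrac{x}{|x|^2}+O(|x|^{-2})$ is not right for the full walk: a direct computation (the uniform measure on $C(x)$ has barycenter $D(x)$) gives that the one-step drift of $X^N$ in the interior is exactly zero, as it should be for an approximation to Brownian motion. The Bessel drift $\tfrac{d-1}{2r}$ belongs to the \emph{radial} part $Y^N=|X^N|$, not to $X^N$ itself. This does not break the generator identification (which comes out as $\tfrac12\Delta$ either way), but your exposition conflates the two processes.

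Second, the boundary term has the opposite sign from what you wrote. With the Stroock--Varadhan convention (positive derivative in the \emph{reflection}, i.e.\ inward, direction), each boundary step contributes a \emph{nonnegative} first-order term to $\E[f(Z^N_{u_{n+1}})-f(Z^N_{u_n})]$, which is what one needs to drop to obtain the submartingale inequality $\liminf \E[\,\cdots\,]\ge 0$. Your ``nonpositive'' would go the wrong way.

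The real gap is in the phrase ``hence in aggregate a nonpositive (in the limit, zero or nonpositive) contribution.'' Each boundary step contributes, besides the signed first-order term, an $O(N^{-2})$ Taylor remainder of \emph{indeterminate} sign. Summed over the number of boundary visits of $X^N$ up to time $N^2 t$, this gives $O(N^{-2})\cdot \E[L^{N^2}_N]$, which you have not controlled: a priori the walk could spend a positive fraction of its $N^2$ steps at the boundary, making the error $O(1)$. The paper isolates exactly this point and proves (via a renewal/excursion argument for the return times to level $N$, using a comparison with simple random walk and Stirling) that $\E^N[L^{N^2}_N]\le c\,N^{3/2}=o(N^2)$. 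Without an estimate of this type the identification step does not close. The same local-time lemma, combined with the monotonicity of $m\mapsto\E^m[L^{N^2}_m]$ already established for $Y^N$, also disposes of the $O(N^{-2})$ error at the innermost shell $|x|=(d-2)/(2N)$; your proposed localization near $0$ would be a legitimate alternative for that part, but it does nothing for the boundary, where the issue is unavoidable.
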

The proof of this fact is essentially the same as the convergence proof in \cite{D}. First
we need to establish the existence of a weak limit of the process $Z^N_t$ that 
interpolates $\frac{1}{N}X^N_{[N^2t]}$ linearly. That is, of the continuous process that equals  $\frac{1}{N}X^N_{[N^2t]}$ at the times of the jumps of the process and
is linear in between.
The process  $Z^N_t$ converges weakly by  H\"older 
continuity and Prohorov's theorem (see \cite{D}.) 
Our main goal here is to identify this weak limit as RBM on $\bB$. 
To accomplish this we use the submartingale characterization 
of the reflected Brownian
motion (see introduction in \cite{VS}).  More precisely, 
the RBM in $\bB$ is the only stochastic process starting from
$x\in \bB$ such that for any $f\in C^2_b(\bB)$ 
with positive normal derivative at each point of the boundary of $\bB$,  the process
\begin{gather}
  f(W^{\bB}_t)-\int_0^t \Delta f(W^{\bB}_s)ds
\end{gather}
is a submartingale.  Hence, to prove that $Z^N_t\to W^{\bB}_t$ weakly, 
 it is enough to show that
\begin{gather}\label{inequality}
  \liminf_{N\in {\mathbb N}} E\left( f(Z^N_t)-f(Z^N_s)-
  \frac12\int_s^t \Delta f(Z^N_u) du \right)
  \geq 0,
\end{gather}
for all such functions $f$.  Here and in the sequel, $\Delta$ denotes the Laplacian in $\bR^d$. 

First we will calculate an expectation of the 
single jump of $\frac1N X^N_{[N^2t]}$. Note that 
by the definition, $Z^N_t=\frac1N X^N_{[N^2t]}$ at the jump times.
\begin{lem}
  Let $u_n=n/N^2$ be the points where
   the process $\frac1N X^N_{[N^2t]}$ makes 
  its jumps. Then
  \begin{gather}
    \begin{split}
    \E^x\left( f(Z^N_{u_{n+1}})-f(Z^N_{u_n}) \right)&=
    \E^x\Bigl({1\over2N^2}\Delta f(Z^N_{u_n})+o(N^{-2})
    \\&+
    O(N^{-2})1_{\left\{|Z^N_{u_n}|={d-2\over2N}\right\}}
    \\&+
    (-c_{N}\partial_1 f(Z^N_{u_n})+O(N^{-2}))1_{\left\{|Z^N_{u_n}|=1\right\}}\Bigr),
  \end{split}
  \end{gather}
  where $\partial_1f$ denotes the outer normal 
derivative of $f$ on $\{y: |y|=|x|\}$. 
\end{lem}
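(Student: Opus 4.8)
The plan is to compute the one--step conditional expectation $\E\big(f(Z^N_{u_{n+1}})-f(Z^N_{u_n})\mid Z^N_{u_n}\big)$ as an explicit function of the current state, and then take $\E^x$ of the result. Write $z=Z^N_{u_n}$, put $r=|z|$ and $m=N|z|$ (a state of $Y^N$), and let $e=z/|z|$ be the outward radial unit vector, so that $\partial_1 f=\nabla f\cdot e$ and $\partial_1^2 f=e^{T}D^2f(z)\,e$ are the first and second radial derivatives of $f$ at $z$. Since $f\in C^2_b(\bB)$ on the compact ball, $D^2f$ is bounded and uniformly continuous with some modulus $\omega$, so
\[
f(z+h)=f(z)+\nabla f(z)\cdot h+\tfrac12\,h^{T}D^2f(z)\,h+R(z,h),\qquad |R(z,h)|\le\omega(|h|)\,|h|^2,
\]
uniformly for $z,z+h\in\bB$; and for each admissible jump $h$ of $X^N$ the segment $[z,z+h]$ stays in $\bB$, so this expansion applies. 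By Definitions \ref{defY_n} and \ref{defY_n^N} there are three kinds of states: a generic interior state $\tfrac{d-1}{2}\le m<N$ (covering also the smallest radius $m=\tfrac{d-1}{2}$ for odd $d$, where the inward probability vanishes), the reflection state $m=N$, and, only for even $d$, the bottom state $m=\tfrac d2-1$.

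At a generic interior state the displacement $h$ is $h_U=e/N$ with probability $\tfrac12$, $h_D=-e/N$ with probability $\tfrac12-\tfrac{d-1}{4m}$, or $h_C=-e/N+v$ with probability $\tfrac{d-1}{4m}$, where $v\perp e$, $|v|=2\sqrt{m}/N$, and $v$ is uniform on the transverse $(d-2)$--sphere, so $\E v=0$ and $\E[v_iv_j]=\tfrac{|v|^2}{d-1}\delta_{ij}$ on $e^{\perp}$, whence $\E[v^{T}D^2f(z)v]=\tfrac{|v|^2}{d-1}\big(\Delta f(z)-\partial_1^2 f(z)\big)$. Inserting these into the expansion and averaging: the first--order terms equal $\tfrac1N\partial_1 f(z)$ times $\tfrac12-\big(\tfrac12-\tfrac{d-1}{4m}\big)-\tfrac{d-1}{4m}=0$, so they cancel exactly; this is the single place where the precise form of the transition probabilities is used. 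The radial part of the second--order term gives $\tfrac1{2N^2}\partial_1^2 f(z)$ (combined weight $1$), while the directions transverse to $e$ are seen only by the $C$--move, contributing $\tfrac{d-1}{4m}\cdot\tfrac12\cdot\tfrac{(2\sqrt{m}/N)^2}{d-1}\big(\Delta f(z)-\partial_1^2 f(z)\big)=\tfrac1{2N^2}\big(\Delta f(z)-\partial_1^2 f(z)\big)$. These add to $\tfrac1{2N^2}\Delta f(z)$. The remainder is $\le\omega(|h|)|h|^2$ for each move: $\le\omega(1/N)N^{-2}=o(N^{-2})$ for $h_U,h_D$; and for $h_C$ one has $|h_C|=O(N^{-1/2})$ but $\tfrac{d-1}{4m}|h_C|^2=\tfrac{(d-1)(4m+1)}{4mN^2}=O(N^{-2})$, so that move's remainder is $\le\omega\big(O(N^{-1/2})\big)\,O(N^{-2})=o(N^{-2})$ as well; all estimates are uniform in $z$. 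Hence on the event that $Z^N_{u_n}$ is a generic interior state the conditional expectation equals $\tfrac1{2N^2}\Delta f(Z^N_{u_n})+o(N^{-2})$.

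At the reflection state $m=N$ only two moves occur, ``stay'' ($h=0$, probability $\tfrac12+\tfrac{d-1}{4N}$) and $h_D=-e/N$ (probability $\tfrac12-\tfrac{d-1}{4N}$), so the conditional expectation is $-\big(\tfrac12-\tfrac{d-1}{4N}\big)\tfrac1N\partial_1 f(z)+O(N^{-2})=-c_N\partial_1 f(z)+O(N^{-2})$ with $c_N=\tfrac1{2N}$ (the error term absorbing $\tfrac{d-1}{4N^2}\partial_1 f(z)$ and the bounded quantity $\tfrac1{2N^2}\Delta f(z)$); here $\partial_1 f$ is the outer normal derivative on $\partial\bB$. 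At the bottom state $m=\tfrac d2-1$ (even $d$) the moves are $h_U=e/N$ (probability $\tfrac12$) and $h_C=-e/N+v$ (probability $\tfrac12$), but now $|v|=2\sqrt{m}/N=O(N^{-1})$ because $m$ is a fixed constant, so once the first--order terms cancel as above every surviving term is $O(N^{-2})$ and the conditional expectation is $O(N^{-2})$ there. Taking $\E^x$ and using the tower property, and noting that on the two exceptional events the term $\tfrac1{2N^2}\Delta f(Z^N_{u_n})$ is itself $O(N^{-2})$ and so can be absorbed into the $O(N^{-2})$ indicator terms, yields the displayed identity.

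The crux is the off--radial $C$--jump. Its size is of order $N^{-1/2}$, far larger than the order $N^{-1}$ radial steps, so a crude bound on its contribution is only $O(N^{-1})$, which would destroy the $O(N^{-2})$ accuracy we need. Three features of the walk rescue the estimate: its first--order effect is annihilated by averaging against the uniform measure $\mu_x$ on the transverse sphere; the probability $\tfrac{d-1}{4|x|}$ is tuned so that, after multiplication by one half of the transverse variance, one recovers exactly $\tfrac1{2N^2}$ times the transverse part $\Delta f-\partial_1^2 f\;\big(=\tfrac{d-1}{r}\partial_1 f+\tfrac1{r^2}\Delta_{S^{d-1}}f\big)$ of the Laplacian, so that the Bessel drift is produced by a second--order term rather than a first--order one; and the Taylor remainder is controlled because the $O(N^{-1})$ probability beats the $O(N^{-1})$ squared jump size. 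Verifying this bit of arithmetic, along with the routine check that $[z,z+h_C]\subset\bB$ so the $C^2$ expansion is valid up to the boundary, is the only delicate point; the rest is bookkeeping.
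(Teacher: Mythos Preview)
Your proof is correct and follows essentially the same approach as the paper's: reduce via the Markov property to the one--step mean increment $A(z)=\E^{Nz}\big(f(\tfrac1N X^N_1)-f(z)\big)$, Taylor expand $f$ about the current state, and handle the three types of states (generic interior, reflection, bottom) separately. Your decomposition of the $C$--jump as $h_C=-e/N+v$ with $v$ uniform on the transverse $(d-2)$--sphere of radius $2\sqrt{m}/N$ is exactly the paper's geometric picture, and your second--order accounting (radial part with total weight $1$, transverse part only from the $C$--move giving $\tfrac1{2N^2}(\Delta f-\partial_1^2 f)$) reproduces the paper's computation in slightly more streamlined notation. Your remainder bound via the modulus of continuity of $D^2f$ and the observation that $\tfrac{d-1}{4m}|h_C|^2=O(N^{-2})$ uniformly in $m$ is the same mechanism the paper uses when it writes ``$o(|x|/N)$'' and then invokes $2N|x|\ge d-1$.
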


\begin{proof}
  Let $A(x)=\E^{Nx}(f(\frac{1}{N} X^N_1)-f(x))$. Note that if the starting point for the process $Z^N_{u_n}$ is $x$, then the corresponding starting point of $X_n^N$ is $Nx$.
  By the strong Markov property for $X^N$ and the 
definition of $Z^N_t$,
\begin{gather}
\label{markov}
  \begin{split}
  \E^x\left( f(Z^N_{u_{n+1}})-f(Z^N_{u_n}) \right)&=
  \E^{Nx}\left( f\left(\frac{1}{N} X^N_{n+1}\right)-f\left(\frac{1}{N} X^N_n\right)\right)
  \\&=
  \E^{Nx}\E^{X^N_n}\left(f\left(\frac{1}{N} X^N_1\right)- f\left(\frac{1}{N} X^N_0\right)\right)
  \\&=
  \E^{Nx} A\left(\frac{1}{N} X^N_n\right)=\E^x A(Z^N_{u_n}).
\end{split}
\end{gather}

Let $\mu_x$ be the uniform probability measure on 
$C(Nx)/N$. For $1>|x|\geq {d-1\over2N}$, $|x|=k/N$ 
(the states of the rescaled process) 
and for any function $f\in C^2_b$,
\begin{gather}
  \begin{split}
    A(x)&=\frac12f\left({U(Nx)\over N}\right)+
    \left(\frac12-{d-1\over4N|x|} \right)f\left({D(Nx)\over N}\right)
    \\&\;\;\;\;\;\;\;\;\;\;\;\;\;+
    {d-1\over4N|x|}\int_{C(Nx)\over N}f(y)d\mu_x(y)-f(x),
  \end{split}
\end{gather}

Let $\partial_1$ denotes the outer normal 
derivative to the sphere $\{y: |y|=|x|\}$ at the
point $x$. Let also 
$\partial_{11}^2 =\partial_1\partial_1$. We have
\begin{gather}
  \begin{split}
    A(x)&=
    \frac12\left(\frac1N\partial_1 f(x)+
    \frac1{2N^2}\partial^2_{11} f(x')\right)
    \\&+
    \left(\frac12-{d-1\over4N|x|}\right)
    \left(-\frac1N\partial_1 f(x)+\frac1{2N^2}\partial^2_{11} f(x'')\right)
    \\&+
    {d-1\over4N|x|}\int_{C(Nx)/N}[(y-x)\cdot\nabla] 
    f(x)+\frac12[(y-x)\cdot\nabla]^2f(z(y))
    d\mu_x(y)
    \\&=
    \frac1{2N^2}\partial^2_{11} f(x')-{d-1\over4N|x|}
    \left(-\frac1N\partial_1 f(x)+\frac1{2N^2}
    \partial^2_{11} f(x'')\right)
    \\&+
    {d-1\over4N|x|}\int_{C(Nx)/N}[(y_1-x_1)
    \partial_1] f(x)+\frac12[(y-x)\cdot\nabla]^2f(z(y))
    d\mu_x(y),
  \end{split}
\end{gather}
since $C(x)$ is a sphere on $d-1$ dimensional hyperplane orthogonal
to $x$ and centered in $D(x)$. We also have $y_1-x_1=-1/N$ 
hence the first order
terms cancel and we get
\begin{gather}
  \begin{split}
    A(x)&=
    \frac1{2N^2}\partial^2_{11}f(x')-
    {d-1\over8N^3|x|}\partial^2_{11}f(x'')
    \\&+
    {d-1\over8N|x|}
    \int_{C(Nx)/N}[(y-x)\cdot\nabla]^2
    f(z(y))d\mu_x(y).
  \end{split}
\end{gather}

For $x\in B(0,1)$, the function $f$ has uniformly 
continuous second order derivatives.
Hence the error in the Taylor expansion is uniformly bounded. 
If we denote the derivatives in the directions tangent
to the sphere $\{y:|y|=|x|\}$ at $x$ by $\partial_i$, $i \ge 2$, and $\partial_1$ as before denotes the outer normal derivative to the sphere $\{y:|y|=|x|\}$, then
\begin{gather}
  \begin{split}
    \int_{C(Nx)/N}&[(y-x)\cdot\nabla]^2 f(z(y))
    d\mu_x(y)
    \\&=
    \int_{C(Nx)/N}[(y-x)\cdot\nabla]^2 f(x)+o(|y-x|^2)
    d\mu_x(y)
    \\&=
    \int_{C(Nx)/N}\sum_{i=1}^n (x_i-y_i)^2 \partial^2_{ii}f(x)
    d\mu_x(y) +
    o\left(|x|\over N\right),
  \end{split}
\end{gather}
since $|y-x|^2=4(|x|+1/N)/N$ and mixed derivatives 
disappear due to the symmetry
of $C(Nx)/N$. 

If $i=1$ in the above sum, then 
$(x_1-y_1)^2=1/N^2$. For each $i\geq2$ the
integral has the same value due to the symmetry of $C(Nx)/N$. Also
\begin{gather}
  \sum_{i=2}^n (x_i-y_i)^2 = {4|x|\over N}.
\end{gather}
Hence
\begin{gather}
  \begin{split}
    A(x)&=\frac1{2N^2}\partial^2_{11}f(x)+o(N^{-2})-
    {d-1\over8N^3|x|}\partial^2_{11}f(x)+o(N^{-3}|x|^{-1})
    \\&+
    {d-1\over8N|x|}\left[ {1\over N^2}\partial^2_{11}f(x)+
    \sum_{i=2}^d \frac{4|x|}{N(d-1)}\partial^2_{ii}f(x) + o\left(|x|\over N\right)\right]
    \\&=
    \frac1{2N^2}\Delta f(x)+o(N^{-2}),
  \end{split}
\end{gather}
since $2N|x|\geq d-1$.

Now we have to consider two special cases. First suppose $d$ is even and 
$|x|={d-2\over2N}$. Then
\begin{gather}
  \begin{split}
    A(x)&=O(N^{-2})=\frac1{2N^2}\Delta f(x)+O(N^{-2}).
  \end{split}
\end{gather}
Note that the error is uniform, just like in the first case.

The remaining case is the reflection circle. That is,  the points $|x|=1$. These points
correspond to the times when $|X^N_{[N^2t]}|=N$. That is, when the walk $X^N$ is on the boundary. 
Hence we have to use the transition
steps from Definition \ref{defY_n^N}.
Now,
\begin{gather}
  \begin{split}
    A(x)&=\left(\frac12+{d-1\over 4N|x|}\right) f(x)+\left(\frac12-{d-1\over 4N|x|}\right)
    f\left(D(Nx)/N \right)-f(x)
    \\&=
    \left(-\frac12+{d-1\over 4N}\right) f(x)+
    \left(\frac12-{d-1\over 4N}\right)\left(f(x) -\frac1N\partial_1 f(x) +O(N^{-2}) \right)
    \\&=
    -\frac1N\left(\frac12-{d-1\over 4N|x|}\right)\partial_1 f(x) +O(N^{-2})
    \\&=
    -c_{N}\partial_1 f(x)+{1\over 2N^2}\Delta f(x)+O(N ^{-2}).
  \end{split}
\end{gather}
As before,  $\partial_1$ is the outer normal derivative to the sphere $\left\{y:\; |x|=|y| \right\}$.
Hence $-\partial_1$ becomes a normal derivative along the reflection direction if
$x$ is on the boundary ($|x|=1$).
By the definition of $f$ the first order term above is positive.

Combining all the cases we obtain 
\begin{gather}
  \begin{split}
    A(x)&={1\over2N^2}\Delta f(x)+o(N^{-2})+O(N^{-2})1_{\{|x|={d-2\over2N}\}}
    \\&+
    (-c_{N}\partial_1 f(x)+O(N^{-2}))1_{\{|x|=1\}}.
  \end{split}
\end{gather}

This and (\ref{markov}) give the assertion of the lemma.

\end{proof}

We are now ready to prove (\ref{inequality}). Summing the expressions from the
above lemma over $s\leq u_n\leq t$ yields
\begin{gather}\label{bound}
  \begin{split}
    \E^x\left( f(Z^N_t)-f(Z^N_s) \right)&=
    \E^x\left( \int_s^t \frac12 \Delta f(Z^N_u) du \right)
    \\&+
    \E^x\left({\mathcal L}^N_{d-2\over 2N}(s,t) \right)O(N^{-2})
    \\&+
    \sum_{s\leq u_n\leq t} E^x\left( -c_N \partial_1 f(Z^N_{u_n})1_{\{|Z^N_{u_n}|=1\}} \right)
    \\&+
    \E^x\left({\mathcal L}^N_{1}(s,t) \right)O(N^{-2})+o(1),
  \end{split}
\end{gather}
where ${\mathcal L}^N_\alpha(s,t)$ denotes the local time (number of visits) of $Z^N$ on the 
set $|x|=\alpha$ between times $s$ and $t$. Note that the term involving $- \partial_1 f$ is always positive,
since by the definition $f$ has a positive derivative in the reflection direction.
In order to finish the proof of (\ref{inequality}),  we need the following lemma

\begin{lem}\label{occtime}
  Let $L^{N^2}_y=\#\left\{n:\; Y^N_n = y,\; n\leq N^2 \right\}$ be the local time at $y$. Then 
  \begin{gather}
    \E^y(L^{N^2}_N)=o(N^2).
  \end{gather}
  Moreover, $\E^y(L^{N^2}_y)$ is increasing with $y$.
\end{lem}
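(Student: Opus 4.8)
The plan is to establish the two assertions separately, using the structure of the walk $Y^N_n$ and the comparison results already proved. For the first assertion, $\E^y(L^{N^2}_N)=o(N^2)$, I would argue that the reflected walk $Y^N$ spends a vanishing fraction of its time at the single boundary state $N$. The natural approach is a renewal/excursion decomposition: each visit to $N$ is followed by an excursion into $\{0,1,\dots,N-1\}$ before the next return to $N$ (the walk is forced to step down to $N-1$ after the loop at $N$, so consecutive returns are separated by genuine excursions). If one can show that the expected length of an excursion away from $N$ is of order at least $N$ (which should follow from the fact that once at $N-1$, the rescaled walk behaves like a reflected Bessel process and typically wanders a macroscopic distance before returning, so the return time to $N$ is $\gtrsim N^2$ in expectation, or at the very least $\gg 1$), then the number of returns in time $N^2$ is $o(N^2)$. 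Since each return contributes exactly one loop step (probability $\tfrac12+\tfrac{d-1}{4N}$ of staying, then forced exit), the total occupation time $L^{N^2}_N$ is within a bounded factor of the number of returns, hence $o(N^2)$. An alternative, cleaner route: by reversibility the walk $Y^N$ has a stationary measure $\pi$, and $\E^y(L^{N^2}_N)/N^2 \to \pi(N)/\pi(\text{total})$ up to mixing corrections; computing $\pi$ from the detailed-balance relation $\pi(m)p(1,m,m+1)=\pi(m+1)p(1,m+1,m)$ shows $\pi$ spreads over $\Theta(N)$ states with comparable mass, so the single state $N$ carries mass $O(1/N)=o(1)$ of the total, giving the bound directly.

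For the monotonicity of $\E^y(L^{N^2}_y)$ in $y$, I would write
\[
\E^y(L^{N^2}_y)=\sum_{n=0}^{N^2}\P^y(Y^N_n=y)=\sum_{n=0}^{N^2}p_N(n,y,y),
\]
and here I expect to invoke the Proposition already proved, that for each fixed $n$ the diagonal $p_N(n,m,m)$ is increasing in $m$. Summing over $n$ preserves the inequality, so $\E^y(L^{N^2}_y)$ is increasing in $y$, and in fact strictly increasing for $n\ge 2$ terms. This part is essentially immediate given the earlier Proposition; the only care needed is that the sum runs only up to $N^2$ (so all states involved are $\le N$ and the Proposition applies to every term, with the convention that $p_N(0,y,y)=1$ for all $y$, which is constant and harmless).

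The main obstacle is the first assertion, and specifically making rigorous the claim that excursions from the boundary state $N$ are long in expectation — i.e., that the walk does not return to $N$ too quickly on average. The delicate point is that the walk has a drift toward the boundary when near it (the transition probability $p(1,m,m+1)=\tfrac12+\tfrac{d-1}{4m}$ exceeds $\tfrac12$), so one must rule out a positive-recurrence-like concentration of occupation time at $N$. I would control this either via the explicit stationary measure computation sketched above (which cleanly quantifies that $\pi$ is spread out because the up/down bias is only $O(1/m)$, so the measure grows at most polynomially and the mass is shared among $\sim N$ states), or by a Lyapunov/martingale argument bounding $\E^N(\text{return time to }N)$ from below by comparison with the corresponding estimate for the reflected Bessel process, whose boundary local time over $[0,t]$ is a.s.\ finite and whose discretization therefore accumulates $o(N^2)$ steps at the boundary. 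Either way, the heart of the matter is a quantitative "the boundary is thin" statement, and everything else is bookkeeping.
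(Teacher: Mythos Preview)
Your argument for the monotonicity of $\E^y(L^{N^2}_y)$ is exactly what the paper does: write the local time as $\sum_{n=0}^{N^2} p_N(n,y,y)$ and invoke the diagonal monotonicity Proposition term by term.

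For the $o(N^2)$ bound, the renewal/excursion decomposition you outline is also the paper's route, but the paper fills precisely the gap you flag --- and in a way that differs from both of your suggestions. Rather than aiming for expected excursion length $\gtrsim N$ (which would be hard to prove directly) or invoking the stationary measure, the paper obtains a \emph{lower bound on the tail} of the first-return time $S_1$ by explicit path counting: for $2n \le 2\lfloor N/4\rfloor$, the number of excursion paths from $N$ back to $N$ of length $2n$ that stay below $N$ is the Catalan number $\tfrac{1}{n}\binom{2n-2}{n-1}$, and on such paths every step probability is bounded below by $\tfrac12 - \tfrac{d-1}{8\lfloor N/4\rfloor}$. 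This gives $P^N(S_1 = 2n) \ge c\,n^{-3/2}$, hence the truncated return time $T_1 = S_1 \wedge 2\lfloor N/4\rfloor$ satisfies $\E^N T_1 \ge c\,N^{1/2}$. A Chebyshev argument then shows that $T_1 + \cdots + T_M > N^2$ with high probability once $M \sim N^{3/2}$, yielding $\E^N(L^{N^2}_N) \le c\,N^{3/2}$. So the quantitative input is weaker than you guessed ($N^{1/2}$, not $N$), but it is established from scratch rather than borrowed from Bessel asymptotics.

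Two small corrections to your sketch. First, at $N$ the walk stays put with probability $\tfrac12 + \tfrac{d-1}{4N}$, so it is \emph{not} forced to step down after a single loop; consecutive loops occur geometrically many times. This is harmless (the geometric has bounded mean) but it means the paper's return times $R_{k+1}-R_k$ can equal $1$, and one should work with them directly rather than with ``genuine excursions''. Second, your stationary-measure alternative needs a mixing-time bound uniform in $N$ to pass from $\pi(N) = O(1/N)$ to a finite-time occupation estimate starting from an arbitrary state; without that, ``up to mixing corrections'' is exactly where the difficulty hides, and the paper avoids this entirely.
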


We need to reduce the local time of the process $Z^N_t$ to the local time of the process $Y^N_n$.
Since $t$ is fixed, the expectation of the local time ${\mathcal L}^N_\alpha(s,t)$ is comparable (with constant depending on $t$ but not on $N$) to the expectation of same on the interval $0$ to $1$.
More precisely, by the strong Markov property for any $\alpha\leq1$ we have
\begin{gather}
  \begin{split}
    \E^x({\mathcal L}^N_\alpha(s,t))&\leq\E^\alpha({\mathcal L}^N_\alpha(0,t))=
    \E^\alpha\left({\mathcal L}^N_\alpha(0,1)+\E^{Z^N_1}({\mathcal L}^N_\alpha(0,t-1))\right)
    \\&\leq
    \E^\alpha\left({\mathcal L}^N_\alpha(0,1)+\E^\alpha({\mathcal L}^N_\alpha(0,t-1))\right)
    \\&=
    \E^\alpha({\mathcal L}^N_\alpha(0,1))+\E^\alpha({\mathcal L}^N_\alpha(0,t-1))
    \\&\leq\dots\leq
    \lceil t \rceil \E^\alpha({\mathcal L}^N_\alpha(0,1)),
  \end{split}
\end{gather}
where $\lceil t\rceil$ is the smallest integer bigger or equal to $t$. 

But, the local time ${\mathcal L}^N_\alpha(0,1)$ of $Z^N$ is the same as the local 
time $L^{N^2}_{N\alpha}$ of $Y^N$. Hence, by Lemma \ref{occtime} both error terms involving the local time
in (\ref{bound}) are negligible. This ends the proof of (\ref{inequality}).
Hence the process $\frac1N X^N_{[N^2t]}$ converges weakly to $W_t^B$.
It follows, that $\frac1N Y^N_{[N^2t]}$ converges weakly to $R_t$.
The monotonicity of $p_R^N(t,x,x)$ will follow from
the monotonicity of the approximating random walk, as we shall show in \S 4 below. 


\begin{proof}[Proof the Lemma \ref{occtime}]

The idea of the proof is based on the following well known facts for the random walk $Z_n$ on $\Z$, $Z_0 = 0$, $P(Z_{n + 1} = Z_n + 1) = P(Z_{n + 1} = Z_n - 1) = 1/2$. It is a standard fact (which follows from the reflection principle) that the number of paths of length $2 n$ satisfying 
$$
\{Z_1 > 0, \, Z_2 > 0, \, \ldots , \, Z_{2 n - 1} > 0, \, Z_{2 n} = 0\}
$$
is $$\frac{1}{n} {2 n - 2 \choose n - 1}.$$ Hence,
$$
P \{Z_1 > 0, \, Z_2 > 0, \, \ldots , \, Z_{2 n - 1} > 0, \, Z_{2 n} = 0\} =
\frac{1}{n} {2 n - 2 \choose n - 1} 2^{- 2 n}.
$$

Next we  show that $E^{x}(L_N^{N^2}) = o(N^2)$. Of course, $E^{N}(L_N^{N^2}) \ge E^{x}(L_N^{N^2})$ so it is sufficient to show that $E^{N}(L_N^{N^2}) = o(N^2)$.
Consider  the sequence of stopping times  $R_0 = 0$,
 $R_{k + 1} = \inf\{m > R_k: \, Y^{N}(m) = N\}$. We have
$$
E^{N}(L_N^{N^2}) = E^{N}(\max\{k \in \N: \, R_k \le N^2\}).
$$
We have $R_k = \sum_{j = 1}^k (R_j - R_{j - 1})$. $\{R_j - R_{j - 1}\}_{j = 1}^{\infty}$ is a sequence of i.i.d. random variables with $R_j - R_{j - 1} \stackrel{d}{=} R_1$. Let $\{S_i\}_{i = 1}^{\infty}$ be a sequence of i.i.d. random variables with $S_i \stackrel{d}{=} R_1 = \inf\{m > 0: \, Y^{N}(m) = N\}$ and $T_i = S_i \wedge 2 [N/4]$. We have
\begin{eqnarray}
\nonumber
E^{N}(L_N^{N^2}) &=& 
E^{N}(\max\{k \in \N: \, S_1 + \ldots S_k \le N^2\})  \\
\label{localest}
&\le& E^{N}(\max\{k \in \N: \, T_1 + \ldots T_k \le N^2\}).
\end{eqnarray}
Note also that $N - 2[N/4] \ge N/2 \ge 2 [N/4]$. We may and do assume that $N$ is large enough so that $2 [N/4] > (d - 1)/2$. 

Our next aim is to estimate $E^{N} T_1$. Let $2 n \le 2 [N/4]$. The number of paths of length $2 n$ satisfying 
$$
\{
Y_0^{N}= N, Y_1^N < N,\, Y_2^N < N, \, \ldots , \, Y^{N}_{2 n - 1} < N, \, Y^{N}_{2 n} = N\}
$$
is  $$\frac{1}{n} {2 n - 2 \choose n - 1}.$$
Using Definitions \ref{defY_n} and \ref{defY_n^N} and the fact that $m \ge N - 2n \ge N - 2 [N/4] \ge 2 [N/4],$ we obtain
\begin{eqnarray*}
P^{N}(S_1 = 2 n) 
&=& P^{N}\{
Y_0^{N}= N, Y_1^N < N,
\, \ldots , \, Y^{N}_{2 n - 1} < N, \, Y^{N}_{2 n} = N\} \\
&\ge& \frac{1}{n} {2 n - 2 \choose n - 1} \left(\frac{1}{2} - \frac{d - 1}{8 [N/4]}\right)^{2 n }.
\end{eqnarray*}
Observe that 
$$
\left(\frac{1}{2} - \frac{d - 1}{8 [N/4]}\right)^{2 n} \ge
\frac{1}{2^{2 n}} \left(1 - \frac{d - 1}{4 [N/4]}\right)^{2 [N/4]} \ge
\frac{c}{2^{2n}}.
$$
We now adopt the convention that $c = c(d) > 0$ is a positive constant which can change its value from line to line.

By Stirling formula
\begin{eqnarray*}
 \frac{1}{n} {2 n - 2 \choose n - 1} \frac{1}{2^{2 n}} 
&=& \frac{(2 n - 2)!}{n ((n - 1)! )^2 2^{2 n}} \\\\
&\ge& \frac{(2 n - 2)^{2 n - 2} e^{- (2 n - 2)} \sqrt{2 \pi (2 n - 2)}}{n (n - 1)^{2 (n - 1)} e^{- 2(n - 1)} 2 \pi (n - 1) e^{2/(12(n - 1))} 2^{2 n}} \\\\
&\ge& \frac{c}{n^{3/2}}.
\end{eqnarray*}
Hence $P^{N}(S_1 = 2 n) \ge c/n^{3/2}$. It follows that
$$
E^{N} T_1 \ge \sum_{n = 1}^{[N/4]} 2 n P^{N}(S_1 = 2 n) \ge c N^{1/2}.
$$

Now we will estimate (\ref{localest}). Note that $T_1 \ge 1$ so $\max\{k \in \N: \, T_1 + \ldots T_k \le N^2\} \le N^2$. 

Let $M = [2 N^2/E^{N}T_1] + 1$ so that $M E^{N} T_1 - N^2 \ge N^2$. Note that $M \le c N^{3/2}$. We have
\begin{eqnarray}
\nonumber
E^{N}(L_N^{N^2}) &\le& 
E^{N}(\max\{k \in \N: \, T_1 + \ldots T_k \le N^2\}) \\
\nonumber
&=&  
E^{N}(\max\{k \in \N: \, T_1 + \ldots T_k\} \le N^2; \, T_1 + \ldots T_M > N^2) \\
\nonumber
&+&  
E^{N}(\max\{k \in \N: \, T_1 + \ldots T_k\} \le N^2; \, T_1 + \ldots T_M \le N^2) \\
\label{localest1}
&\le& M + N^2 P^{N}(T_1 + \ldots T_M \le N^2).
\end{eqnarray}
We have 
\begin{eqnarray}
\nonumber
&& P^{N}(T_1 + \ldots T_M \le N^2) \\
\nonumber
&\le&
P^{N}(|T_1 + \ldots T_M - M E^{N} T_1| \ge M E^{N} T_1 - N^2) \\
\label{probest}
&\le& \frac{E^{N}(|T_1 + \ldots T_M - M E^{N} T_1|^2)}{(M E^{N} T_1 - N^2)^2}
\le \frac{M E^{N}T_1^2}{N^4}.
\end{eqnarray}
Recall that $M \le c N^{3/2}$ and $T_1 = S_1 \wedge 2 [N/4] \le N$. It follows that $E^{N}T_1^2 \le N^2$ and the last expression in (\ref{probest}) is bounded from above by
$$
\frac{c N^{3/2} N^2}{N^4} \le \frac{c}{N^{1/2}}.
$$
By (\ref{localest1}) we obtain $E^{N}(L_N^{N^2}) \le c N^{3/2}$ which gives $E^{N}(L_N^{N^2}) = o(N^2)$.

The monotonicity for the local times follows from
\begin{gather*}
\begin{split}    
  \E_{x}(L_{x}^{N^2})&=
  \E_{x}(\sum_{k=0}^{N^2} 1_{\{Y_k^N=x\}})=
  \sum_{k=0}^{N^2} \P_{x}(Y_k^N=x)=
  \sum_{k=0}^{N^2} p_N(k,x,x)
  \\&\leq
  \sum_{k=0}^{N^2} p_N(k,x+1,x+1)=
  \E_{x+1}(L_{x+1}^{N^2}),
\end{split}
\end{gather*}
where the inequality follows from the heat kernel monotonicity obtained in Section 
 2.
\end{proof}

\section{Proof of Theorem \ref{main}}
First we give the proof that if $d=2$ then $p_R^N(t,r,r)$ is not increasing for
small enough times $t$.

Let $P\subset  \mathbb{R}^2$  be a convex polygon and denote its Neumann heat kernel by   $p^N_P(t, x, y)$. It is proved in  \cite{CZ} that 
\begin{gather}\label{ratio}
  \lim_{t\to 0} \frac{p_P^N(t, x, y)}{p(t, x, y)}=1
\end{gather}
uniformly in $x, y\in P$, where $p$ denotes the heat kernel of the free Brownian
motion in $ \mathbb{R}^2$.  In addition, \cite{CZ},  also proves that if $D_1$ is a convex domain whose closure, 
$\overline{D_1}$, 
is contained in the convex domain $D_2$, then there exists a $t_0$ sufficiently small such that 
\begin{gather}\label{domain}
  p_{D_2}^N(t,x,y)\leq p_{D_1}^N(t, x, y),
\end{gather}
for all $x, y\in D_1$ and $0<t<t_0$, where $t_0$ depends only on the distance between $\partial D_1$ and $\partial D_2$. 
By taking two polygons $P_1$ and $P_2$ such that  $\overline P_1\subset  \bB\subset \overline \bB \subset P_2$ and
combining (\ref{ratio}) with (\ref{domain}) we see that 
\begin{gather}\label{ratiob}
  \lim_{t\to 0}\frac{p^N_\bB(t, x,y)}{p(t,x,y)}=1
\end{gather}
uniformly in $|x|< \frac12$ and $|y|<\frac12$.

Since for any $x\in \bB$ , 
\begin{equation}
P^x\{|W_t^{\bB}|\in (a, b)\}=\int_a^b \int_0^{2\pi} \rho\, p^N_\bB(t,x,\rho e^{i\theta})d\theta
\end{equation}
and the reflected Bessel process is the radial part of
the reflected Brownian motion,  we have

\begin{equation}
  p^R_I(t,r,r)=\int_0^{2\pi} r p^N_\bB(t,r,re^{i\theta})d\theta.
  \end{equation}
  and similarly, 
  \begin{equation}
  q(t,r,r)=\int_0^{2\pi} r p(t,r,re^{i\theta})d\theta.
\end{equation}

Let $\varepsilon>0$ and $r,\rho<1/2$. Using (\ref{ratiob}) we can pick $t(\varepsilon)$ such that
for $t<t(\varepsilon)$
\begin{gather}
  (1-\varepsilon)p(t,r,re^{i\theta})\leq p^N_\bB(t,r,re^{i\theta})\leq (1+\varepsilon)p(t,r,re^{i\theta})\\
  (1-\varepsilon)p(t,\rho, \rho e^{i\theta})\leq p^N_\bB(t,\rho, \rho e^{i\theta})\leq (1+\varepsilon)p(t,\rho,\rho e^{i\theta}).
\end{gather}
From this and the  integral formulas for the Bessel heat kernels above 
\begin{gather}
  (1-\varepsilon)q(t,r,r)\leq p^R_I(t,r,r)\leq (1+\varepsilon)q(t,r,r)\label{bound1}\\
  (1-\varepsilon)q(t,\rho,\rho)\leq p^R_I(t,\rho,\rho )\leq (1+\varepsilon)q(t,\rho,\rho)\label{bound2}.
\end{gather}
From (\ref{2ddens}) we have
\begin{gather}
  q(t,r\sqrt{t},r\sqrt{t})=t^{-1/2}re^{-r^2}I_0(r^2)
\end{gather}
Set $\Phi_0(r)=re^{-r^2}I_0(r^2)$.  Using tables of the Bessel functions
one can check that $\Phi_0(1)\approx 0.4657$ and that $\Phi_0(2)\approx 0.4140 
 $. Hence $\Phi_0$ is not nondecreasing. 
Let $r<\rho$ be such that $\Phi_0(r)>\Phi_0(\rho)$. This is the same as 
$$q(t,r\sqrt{t},r\sqrt{t})>q(t,\rho\sqrt{t},\rho\sqrt{t}).$$

Pick $\varepsilon$ small enough to have
\begin{gather}
 (1-\varepsilon)\Phi_0(r)>(1+\varepsilon)\Phi_0(\rho).
\end{gather}
Now for any $t$ we have
\begin{gather}
  (1-\varepsilon)q(t,r\sqrt{t},r\sqrt{t})>(1+\varepsilon)q(t,\rho\sqrt{t},\rho\sqrt{t}).
\end{gather}
 Set $r_1=r\sqrt{t}$ and $r_2=\rho\sqrt{t}$ so that $r_1<r_2$.   
If we take $t$ small enough to have $t<t(\varepsilon)$ and $r_2<\frac12$, it follows  from (\ref{bound1}) and (\ref{bound2}) that
\begin{gather}
  p^R_I(t,r_1,r_1)\geq (1-\varepsilon) q(t,r_1,r_1)>(1+\varepsilon)q(t,r_2,r_2)\geq p^R_I(t,r_2,r_2).
\end{gather}
This completes the proof of Theorem \ref{main}  when $d=2$.

Now we turn to the case $d\geq3$.  Fix $x$ with $r=|x|<1$. 
Let $f_\varepsilon(x)=\chi_{[r-\varepsilon,r]}$
, $\varepsilon > 0$. 
We have
\begin{gather}
  \E^x\left(f_\varepsilon\left(\frac1N Y^N_{[N^2t]}\right)\right)=
  \P^x\left(\frac1N Y^N_{[N^2t]}\in [r-\varepsilon,r]\right).
\end{gather}
The event above consists of transitions from $r$ to some points to the left of $r$.
Hence by Corollary \ref{trans} this probability is increasing in $r$.
By the weak convergence of $\left(\frac1N Y^N_{[N^2t]}\right)$ to $(R^N_t)$, we have
\begin{gather}
  \E^x\left(f_\varepsilon\left(\frac1N Y^N_{[N^2t]}\right)\right)\to\E^x(f_\varepsilon(R^N_t))=
  \P^x(R^N_t\in[r-\varepsilon,r]).
\end{gather}
Since the  limit of increasing functions is nondecreasing, we have that for arbitrary $\varepsilon$, 
\begin{gather}\label{increasing}
  \int_{r_1-\varepsilon}^{r_1} p^R_I(t,r_1,\rho)\,d\rho\leq
  \int_{r_2-\varepsilon}^{r_2} p^R_I(t,r_2,\rho)\,d\rho,\mbox{ if } r_1<r_2.
\end{gather}

Suppose that $p^R_I(t,r_1,r_1)>p^R_I(t,r_2,r_2)$ for some $r_1<r_2$. By the continuity
of the heat kernel there exists 
$\varepsilon > 0$
 such that
\begin{gather}
\inf_{\rho\in[r_1-\varepsilon,r_1]}p^R_I(t,r_1,\rho)>\sup_{\rho\in[r_2-\varepsilon,r_2]}p^R_I(t,r_2,\rho)
\end{gather}
Hence,
\begin{gather}
  \int_{r_1-\varepsilon}^{r_1} p^R_I(t,r_1,\rho)\,d\rho>\int_{r_2-\varepsilon}^{r_2} p^R_I(t,r_2,\rho)\,d\rho.
\end{gather}
But this contradicts (\ref{increasing}).  Thus $p^R_I(t,r,r)$ is nondecreasing in $r$.

For any $t>0$, the function $p^R_I(t,r,r)$ is a real analytic
function of $r\in [\varepsilon, 1]$, for any $\varepsilon>0$, 
since it is the diagonal of the heat kernel of an
operator with real analytic coefficients. Thus, if it is
nondecreasing then it must be strictly increasing. This
completes the proof of Theorem \ref{main}. \qed

\section{Further remarks\label{generalconjecture}}

 As mentioned above, the Laugesen--Morpurgo conjecture
implies  Rauch's {\it hot--spots} conjecture for the disk. Of
course, as already also mentioned this is a trivial
observation since for the disk the Neumann eigenfunctions are
all explicitly known (Bessel functions) and the {\it
hot--spots} conjecture is trivial by ``inspection". This
observation, however, leads to a more general problem for
planar convex domains where the connection to the {\it
hot--spots} conjecture is more meaningful. 

\begin{conjecture}\label{conj2} Suppose $\Omega$ is a 
bounded convex domain in the plane which is 
 symmetric with respect to the
$x$--axis.  Let $ p^N_{\Omega}(t, z, w)$ be the 
Neumann heat kernel for $\Omega$. Then 
$ p^N_{\Omega}(t, z, z)$ is increasing  along hyperbolic radii in  
$D$ which intersect the horizontal axis.   That is,   let 
 $f:\bB\rightarrow \Omega$  be a conformal map of the unit disk $\bB\subset \bR^2$ 
 onto the domain $\Omega$ for which 
 $f(-1, 1)=\Gamma_f$ is the axis of symmetry of $\Omega$.  
Then for all all $t>0$, $p_{\Omega}^N(t, f(z_1), f(z_1))<p_{\Omega}^N(t, f(z_2), f(z_2))$, 
where $z_1=r_1e^{i\theta}$, $z_2=r_2e^{i\theta}$, $0<\theta<\pi$ and $0<r_1<r_2\leq 1$. 
\end{conjecture}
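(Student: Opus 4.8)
Since the special case $\Omega=\bB$ of Conjecture~\ref{conj2} is precisely Conjecture~\ref{conj1} in the plane, which is still open, what follows is a program rather than a complete proof. \emph{Conformal reduction.} The plan is to transplant everything to the fixed disk, using the conformal invariance (up to time change) of planar reflected Brownian motion. If $f:\bB\to\Omega$ is the conformal map of the statement, then $p^N_\Omega(t,f(z),f(w))=G(t,z,w)$, where $G$ is the Neumann heat kernel on $\bB$ of the operator $L=|f'|^{-2}\Delta$ with respect to the measure $|f'(z)|^2\,dz$; probabilistically, $G(t,z,w)\,|f'(w)|^2\,dw=P^z(B_{\gamma_t}\in dw)$, where $B$ is reflected Brownian motion in $\bB$ and $\gamma_t$ is the inverse of the clock $A_s=\int_0^s|f'(B_u)|^2\,du$. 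Because $f\big((-1,1)\big)\subset\bR$, the identity theorem applied to $z\mapsto\overline{f(\bar z)}$ (which agrees with $f$ on $(-1,1)$) forces $f(\bar z)=\overline{f(z)}$, so $|f'|$ is invariant under $z\mapsto\bar z$ and $f$ is real on the diameter. Conjecture~\ref{conj2} is then equivalent to: for each fixed $\theta\in(0,\pi)$ the map $r\mapsto G(t,re^{i\theta},re^{i\theta})$ is increasing on $(0,1]$; for $f=\mathrm{id}$ this is Conjecture~\ref{conj1} for $d=2$.

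\emph{The disk base case.} For $\Omega=\bB$ one would try to run the random-walk scheme of \S\S2--4 on the full two-dimensional walk $X^N_n$, not just on its radial part $Y^N_n$: prove that the diagonal return probability $p_N(n,x,x)$ is larger at $x=r_2 e^{i\theta}$ than at $x=r_1 e^{i\theta}$ whenever $r_1<r_2$ by a path-by-path comparison in the spirit of Lemma~\ref{loop} and Corollary~\ref{trans}, then pass to the weak limit. The obstruction is the extra angular coordinate: the loop-and-shift bijection that works for $Y^N_n$ is one-dimensional, and once angular steps are allowed a loop based at $r_1e^{i\theta}$ need not be matched with a loop of larger probability based at $r_2e^{i\theta}$. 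Supplying a two-dimensional replacement for this combinatorial lemma is the first main obstacle, and it is exactly what keeps Conjecture~\ref{conj1} open in the plane.

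\emph{From the disk to general $\Omega$.} Granting the base case, I would deform: take convex domains $\Omega_s$ symmetric about the $x$-axis with $\Omega_0=\bB$, $\Omega_1=\Omega$, conformal maps $f_s$ normalized as above, and set $g_s(t,r,\theta)=p^N_{\Omega_s}(f_s(re^{i\theta}),f_s(re^{i\theta}))$. A Hadamard-type variational formula expresses $\partial_s g_s$ as an integral over $\partial\Omega_s$ of $|\nabla_y p^N_{\Omega_s,t/2}(f_s(z),y)|^2$ against the normal speed of the deformation, and the task is to show that differentiating this in $r$ along the radius has the correct sign, using the convexity and symmetry of $\Omega_s$ (which force monotonicity of $|f_s'|$ and of $f_s$ along the axis). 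Alternatively one could replace the variational formula by the nested-domain comparison \eqref{domain} of \cite{CZ} between $\Omega$ and inscribed and circumscribed disks, though this controls only small $t$. The main obstacle here is that Neumann heat kernels, unlike Dirichlet ones, are not monotone under domain inclusion, so the boundary term has indefinite sign in general and must be pinned down precisely from the geometry of $\Omega$.

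\emph{A lower bound on the difficulty.} Since $p^N_\Omega(t,z,z)=|\Omega|^{-1}+e^{-\mu_1 t}|\varphi_1(z)|^2+o(e^{-\mu_1 t})$ uniformly in $z$, the conjecture for large $t$ asserts that $|\varphi_1(f(z))|$ increases along the images of radii meeting the axis; in particular it implies Rauch's hot--spots conjecture (cf.\ \cite{BaBu}) for every convex domain symmetric about a line. A realistic order of attack is therefore: (i) Conjecture~\ref{conj1} for $d=2$; (ii) hot--spots for line-symmetric convex domains; (iii) the interpolation in $t$ and in the domain sketched above, with step (iii) being the part I would expect to be the true sticking point once (i) and (ii) are in hand.
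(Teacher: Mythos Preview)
The statement you are asked to prove is not a theorem but an open conjecture; the paper does not supply a proof of it anywhere. Conjecture~\ref{conj2} is introduced in \S\ref{generalconjecture} precisely as a generalization of Conjecture~\ref{conj1} to symmetric convex planar domains, motivated by the analogous \emph{hot--spots} results in \cite{Pa} and \cite{BP}, and is left entirely open. So there is no ``paper's own proof'' to compare your attempt against.

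That said, your write-up is honest and well calibrated: you correctly identify at the outset that even the special case $\Omega=\bB$ is Conjecture~\ref{conj1} for $d=2$, which remains open (the paper only settles the \emph{radial} Bessel version, and indeed shows it fails for $d=2$). Your conformal reduction to a time-changed Neumann kernel on the disk is the natural first move, and your observation that the large-$t$ asymptotics already forces the \emph{hot--spots} conjecture for line-symmetric convex domains is exactly the link the paper is highlighting. The obstacles you name---the lack of a two-dimensional analogue of the loop-shift bijection, and the non-monotonicity of Neumann kernels under domain inclusion---are genuine and are precisely why the problem is stated as a conjecture. In short: your proposal is not a proof, but neither does the paper claim one; what you have written is a reasonable research program, not something that can be graded against a model solution.
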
  

For a {\it hot--spots} version, which inspired 
Conjecture \ref{conj2}, we refer the reader to
 \cite{Pa} (Theorem 1.1) and \cite{BP} (Theorem 3.1). 

As mentioned in the introduction,  for the Dirichlet heat kernel in
$\bB$ , the opposite actually holds. That is, we have the
following 
\begin{proposition}\label{dirichlet1} Let $\bB$ be
the unit ball in $\bR^d$, $d\geq 2$, and let $p_{\bB}^D(t, x,
y)$ be the heat kernel for the Laplacian in $\bB$ with
Dirichlet boundary conditions. (Equivalently, $p_{\bB}(t, x,
y)$ are the transitions probabilities for the Brownian motion
in $\bB$ killed on its boundary of the ball.) Fix $t>0$. The
(radial) function $p_{\bB}^D(t, x, x)$ decreasing as $|x|$
increases to 1. That is, for all $t>0$,
\begin{equation}\label{dirichlet2} p_{\bB}^D(t, x_2, x_2)<
p_{\bB}^D(t, x_1, x_1), \end{equation} whenever $0\leq \left|
x_1\right| <\left| x_2\right|\leq 1$. 
\end{proposition}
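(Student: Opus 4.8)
The plan is to exploit the probabilistic meaning of the Dirichlet heat kernel together with the rotational symmetry of $\bB$, reducing the claim to a monotonicity statement about the killed $d$-dimensional Bessel process on $[0,1)$, and then to prove that monotonicity directly by a coupling/reflection argument. Recall that $p_{\bB}^D(t,x,y)$ is the transition density of Brownian motion in $\bB$ killed upon hitting $\partial\bB$. Writing $x = r e$ with $e$ a unit vector, rotational invariance gives $p_{\bB}^D(t,x,x) = \int_{|y|=1} \!\!\cdots$ — more to the point, the on-diagonal value depends only on $r=|x|$, and integrating out the angular variable exactly as in \S4 shows $p_{\bB}^D(t,x,x)$ equals (a constant times) the on-diagonal transition density $p^{D,R}_I(t,r,r)$ of the $d$-dimensional Bessel process on $I=(0,1)$ killed at $1$. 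So it suffices to prove that $r \mapsto p^{D,R}_I(t,r,r)$ is \emph{decreasing} on $(0,1)$.

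The key step is a reflection/coupling argument for the killed Bessel process, run backward from the construction used for the Neumann case. Fix $0<r_1<r_2<1$ and set $a = (r_1+r_2)/2$. Start one Bessel process $R^{(1)}$ at $r_1$ and another $R^{(2)}$ at $r_2$, and couple them by reflecting $R^{(1)}$'s increments across the level $a$ until the first coalescence time $\tau$ at which the two processes meet; after $\tau$ let them move together. Before $\tau$, $R^{(2)}$ is the mirror image of $R^{(1)}$ about $a$, so $R^{(2)} = 2a - R^{(1)} > R^{(1)}$, which means $R^{(2)}$ is always \emph{closer to the killing boundary $1$} than $R^{(1)}$; hence $R^{(2)}$ is killed no later than $R^{(1)}$ under this coupling. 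From here one runs the same path-counting/path-surgery bookkeeping as in the Proposition of \S2 — or, more transparently, one writes $p^{D,R}_I(t,r,r)$ via a Chapman--Kolmogorov splitting at time $t/2$ as $\int_0^1 p^{D,R}_I(t/2,r,\rho)^2\, m(d\rho)$ (with $m$ the speed measure), and shows the integrand is pointwise dominated in the appropriate sense by the $r=r_2$ version — to conclude $p^{D,R}_I(t,r_2,r_2) \le p^{D,R}_I(t,r_1,r_1)$. The strict inequality and real-analyticity upgrade is identical to the last paragraph of the \S4 proof: $p^{D,R}_I(t,\cdot,\cdot)$ is the diagonal of the heat kernel of an operator with real-analytic coefficients on $(\eps,1)$, so a non-strict monotone analytic function is strictly monotone.

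The main obstacle is making the comparison of the two killed processes quantitatively correct at the level of \emph{densities} rather than just distribution functions: the reflection coupling shows $R^{(2)}$ reaches $1$ sooner, which controls survival probabilities and hence tails, but the on-diagonal heat kernel is a density, and one must transfer the coupling inequality through the return-to-$r$ constraint. The cleanest route is likely the discrete one: kill the reflected random walk $Y^N_n$ of \S2 at level $N$ (rather than reflecting it there), verify that the resulting walk still has the nondecreasing-loop property — which it does, since we only \emph{removed} the reflecting loop at $N$ and the interior transition probabilities are unchanged — and then mimic the Proposition and Corollary \ref{trans} arguments to get that $p^{D}_N(n,m,m)$ is \emph{decreasing} in $m$ because now paths from larger $m$ are \emph{more} likely to be absorbed, i.e. the path-shifting correspondence goes the opposite way. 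Passing to the limit via the convergence results of \S3 (the killed walk converges to killed RBM by the same submartingale/Prohorov machinery, with an absorbing rather than reflecting boundary) then yields the claim. The one genuinely new point to check is that the limiting procedure respects the killing; this is standard but should be stated.
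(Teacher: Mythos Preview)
Your proposal has two genuine gaps, and differs fundamentally from the paper's approach.

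First, the reduction step is not correct. The formula in \S4 expresses the Bessel diagonal as the \emph{angular integral} $p^{R}_I(t,r,r)=\int_{S^{d-1}} r^{d-1} p_{\bB}(t,x,r\theta)\,d\sigma(\theta)$, not as the pointwise value $p_{\bB}(t,x,x)$. Since $p_{\bB}^D(t,x,y)$ depends on the angle between $x$ and $y$, not just on $|x|$ and $|y|$, the quantity $p_{\bB}^D(t,x,x)$ is genuinely not proportional to $p^{D,R}_I(t,r,r)$, and monotonicity of one does not imply monotonicity of the other.

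Second, the mirror coupling for Bessel processes fails because the Bessel generator carries the position-dependent drift $\frac{d-1}{2r}\partial_r$: if $R^{(1)}$ solves $dR^{(1)}=dB+\frac{d-1}{2R^{(1)}}\,dt$, then $2a-R^{(1)}$ solves $d(2a-R^{(1)})=-dB-\frac{d-1}{2(2a-(2a-R^{(1)}))}\,dt$, which is \emph{not} the Bessel SDE. So the reflected process is not a Bessel process started at $r_2$, and the comparison of killing times is not between two copies of the same law. The discrete fallback has the analogous defect: the nondecreasing loop property of \S2 makes loops at higher levels \emph{more} probable, which by itself pushes $p^D_N(n,m,m)$ toward being increasing; absorption at $N$ pushes the other way, but you give no argument that absorption dominates, and the path-shifting map of \S2 does not ``go the opposite way'' --- shifted-up paths have larger probability in both the reflected and the killed setting, while shifting down runs into the lower boundary.

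The paper's actual proof is short and completely different: it works directly with $\Psi(r)=p_{\bB}^D(t,x,x)$, $|x|=r$, invoking the Brascamp--Lieb log-concavity of Dirichlet heat kernels in convex domains to get $\Psi(\lambda\cdot 0+(1-\lambda)r_2)\ge \Psi(0)^{\lambda}\Psi(r_2)^{1-\lambda}$, and the Brascamp--Lieb multiple-integral rearrangement inequality to get $\Psi(0)>\Psi(r)$ for $r>0$. Choosing $\lambda$ so that $(1-\lambda)r_2=r_1$ then yields $\Psi(r_1)>\Psi(r_2)$ immediately. No reduction to a one-dimensional process and no coupling enter.
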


\begin{remark}
People often mention this result and assert that ``it is clearly obvious by symmetrization."   However, a proof does not seem to be written down anywhere. We should also mention here that the first attempt for a proof simply based on some type of symmetrization argument, or eigenfunction expansion, seems to rapidly fail.  For completeness, we give a short proof here based on the celebrated ``log--concavity" results of H{.} Brascamp  and E{.} Lieb \cite{BrLi}.
\end{remark}

\begin{proof}  Setting $\Psi(r)=p^D_{\bB}(t, x, x)$ for $|x|=r$ we see that by \cite{BrLi}, for any $0\leq \lambda\leq 1$, 
\begin{equation}\label{log-concavity1}
\Psi(\lambda r_0+(1-\lambda)r_1)\geq \Psi(r_0)^{\lambda}\Psi(r_1)^{(1-\lambda)},
\end{equation}
for any $r_0, r_1\in [0, 1]$.  If $0\leq r_1<r_2\leq 1$, we take $r_0=0$ and pick 
$\lambda\in (0, 1)$ such that $(1-\lambda)r_2=r_1$.  
It follows from (\ref{log-concavity1}) that 
\begin{equation}\label{log-concave2}
\Psi(r_1)\geq \Psi(0)^{\lambda}\Psi(r_2)^{(1-\lambda)}.
\end{equation}
However, it also follows from the multiple integral re-arrangement inequalities of \cite{BrLi} that
$$
\Psi(r)=p_{\bB}^D(t, x, x)<p_{\bB}^D(t, 0,0)=\Psi(0),
$$
for all $0<|x|\leq 1$.  Substituting this into (\ref{log-concave2}) 
proves (\ref{dirichlet2}) and completes the proof of the proposition. 
\end{proof}

\begin{remark}\label{onedim0}
For the unit interval $I=(0, 1)$, the Neumann and Dirichlet heat kernels are given by 

\begin{equation}\label{onedim2}
p_{I}^N(t, x, x)=1+\sum_{n=1}^{\infty} 2e^{-n^2\pi^2 t}\cos^2(n\pi x)
\end{equation}
and 
\begin{equation}\label{onedim3}
p_{I}^D(t, x, x)=\sum_{n=1}^{\infty}2 e^{-n^2\pi^2 t}\sin^2(n\pi x),
\end{equation}
respectively; see \cite{bandle}. (The ``2" is there to normalize the eigenfunctions  in $L^2$.) Differentiating (\ref{onedim2}) with respect to $x$ we see that 
\begin{equation}\label{onedim4}
\frac{\partial}{\partial x} p_{I}^N(t, x, x)=-\frac{\partial}{\partial x} p_{I}^D(t, x, x).
\end{equation}

However, the same argument used in the proof of Proposition \ref{dirichlet1} above shows that 
\begin{equation}\label{onedim1} p_{I}^D(t, x_2, x_2)<
p_{I}^D(t, x_1, x_1), 
\end{equation} whenever $1/2\leq 
x_1< x_2\leq 1$. 
This together with (\ref{onedim4}) shows that $p_{I}^N(t, x, x)$
 is increasing on $(1/2, 1)$ and decreasing on $(0, 1/2)$ with 
 minimum at $1/2$. 
\end{remark}

\begin{remark}
It is interesting to note here that for the interval $I$,  
\begin{equation}\label{onedim5}
p_{I}^N(t, x, x) +p_{I}^D(t, x, x)=C_t,
\end{equation}
where $C_t=1+\sum_{n=1}^{\infty}2e^{-n^2\pi^2 t}$ does not depend on $x$. 
\end{remark}

Since the ``log--concavity" result of Brascamp--Lieb holds for all convex domains, 
the above argument gives the following result for more general convex domains.    

\begin{proposition}\label{dirichlet3}
Let $\Omega$ be a bounded convex domain in $\mathbb{R}^2$ which is symmetric
 relative to the $x$-axis.  For any $(x, y)\in \Omega$ we write 
 $p_{\Omega}^D\left(t, (x, y), (x, y)\right)$ for the diagonal of the 
 Dirichlet heat kernel in $\Omega$. Fix $x$ and let $a(x)=sup\{y>0: (x, y)\in \Omega\}$.  
 The function $p_{\Omega}^D\left(t, (x, y), (x, y)\right)$ is decreasing in $y$ for $y\in [0, a(x)]$.  By symmetry, 
 $p_{\Omega}^D\left(t, (x, y), (x, y)\right)$ is increasing in $y$ for  $y\in [-a(x), 0]$.
 \end{proposition}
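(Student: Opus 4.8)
The plan is to mimic, along vertical chords of $\Omega$, the Brascamp--Lieb argument already used for Proposition \ref{dirichlet1}, with the axis of symmetry $\{y=0\}$ playing the role of the centre of $\bB$. Fix $t>0$ and a value $x$ of the first coordinate. There is nothing to prove unless $a(x)>0$, which we assume. Since $\Omega$ is open, convex and symmetric in the $x$--axis, the vertical slice $\{y:(x,y)\in\Omega\}$ is an open symmetric interval with supremum $a(x)$, hence equals $(-a(x),a(x))$; set $g(y)=p_{\Omega}^D\bigl(t,(x,y),(x,y)\bigr)$ on this interval, so that $g(-y)=g(y)$.

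First I would record the only nontrivial input, which is exactly the fact invoked for Proposition \ref{dirichlet1}: by the log--concavity results of Brascamp and Lieb \cite{BrLi}, for a bounded convex domain the diagonal $z\mapsto p_{\Omega}^D(t,z,z)$ is log--concave on $\Omega$. (This is their ``application to the diffusion equation''; restricting the jointly log--concave kernel $(z,w)\mapsto p_{\Omega}^D(t,z,w)$ to the diagonal preserves log--concavity.) Restricting this to the vertical chord through $(x,0)$ shows that $\log g$ is concave on $(-a(x),a(x))$.

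Next I would use the elementary observation that an even concave function is nonincreasing on the nonnegative part of its interval: for $0\le y_1<y_2<a(x)$, pick $\lambda\in[\tfrac12,1]$ with $y_1=\lambda y_2+(1-\lambda)(-y_2)$; concavity of $\log g$ together with $g(-y_2)=g(y_2)$ gives $\log g(y_1)\ge\lambda\log g(y_2)+(1-\lambda)\log g(-y_2)=\log g(y_2)$, i.e. $g(y_1)\ge g(y_2)$. Thus $g$ is nonincreasing on $[0,a(x))$; since every boundary point of a bounded convex domain is regular, $g$ extends continuously with $g(a(x))=0$, so $g$ is nonincreasing on all of $[0,a(x)]$, and by $g(-y)=g(y)$ it is nondecreasing on $[-a(x),0]$. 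To promote this to strict monotonicity I would argue as at the end of the proof of Theorem \ref{main}: for fixed $t>0$ the diagonal $z\mapsto p_{\Omega}^D(t,z,z)$ is real analytic in the interior of $\Omega$ (being the diagonal of the heat kernel of a constant--coefficient operator), hence $g$ is real analytic on $(-a(x),a(x))$; it is positive there and vanishes at the endpoints, so it is nonconstant, and a nonconstant nonincreasing real--analytic function is strictly decreasing.

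The step that takes a moment to see is not any single estimate but the structural point that, unlike in the ball case of Proposition \ref{dirichlet1} --- where one needed the separate rearrangement fact $\Psi(r)<\Psi(0)$ to locate the maximum at the centre --- here the reflection symmetry of $\Omega$ does this automatically: evenness of $g$ forces its maximum on any vertical chord to lie on the axis, and combined with the concavity of $\log g$ this is already the whole proof. The remaining ingredients (the even--concave lemma and the analyticity argument) are routine and both already appear in the paper.
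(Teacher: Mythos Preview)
Your proof is correct and is precisely the elaboration the paper intends: the paper gives no separate proof of Proposition~\ref{dirichlet3} but only the sentence ``Since the `log--concavity' result of Brascamp--Lieb holds for all convex domains, the above argument gives the following result,'' and your argument is exactly that --- Brascamp--Lieb log--concavity along the vertical chord, combined with evenness from the reflection symmetry. Your observation that evenness of $g$ replaces the rearrangement step $\Psi(r)<\Psi(0)$ used in Proposition~\ref{dirichlet1}, together with the analyticity upgrade to strict monotonicity, fills in the only points the paper leaves implicit.
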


Motivated by the fact that the heat kernel for Brownian motion
conditioned to remain forever in $\Omega$  satisfies a Neumann-type boundary
condition, Ba\~nuelos and M\'endez--Hern\'andez  proved in \cite{BaMe} an
analogue for conditioned Brownian motion of the {\it
hot--spots} result of Jerison and Nadirashvili \cite{JeNa}.  Those results and the 
Laugesen--Morpurgo  conjecture motivate the following
\begin{conjecture}\label{conj3}
Let $\varphi_1(x)$ be the ground state eigenfunction for the 
Laplacian in the unit ball $\bB\subset \bR^d$, $d\geq 1$, with Dirichlet boundary conditions. 
The radial function $$\frac{p_{\bB}^D(t, x, x)}{\varphi_1^2(x)}$$ 
is increasing as $|x|$ increases to 1. 
 That is, for all $t>0$, 
\begin{equation}\label{dirichlet4}
\frac{p_{\bB}^D(t, x_1, x_1)}{\varphi_1^2(x_1)}< \frac{p_{\bB}^D(t, x_2, x_2)}{\varphi_1^2(x_2)},
\end{equation} 
whenever  $0\leq \left| x_1\right| <\left| x_2\right|\leq 1$. 
\end{conjecture}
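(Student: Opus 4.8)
The plan is to recast Conjecture~\ref{conj3} probabilistically through the ground-state (Doob) $h$-transform, peel off the angular modes so that only one-dimensional problems remain, and then attack each of those by the random-walk approximation scheme developed in \S\S2--4. \emph{Step 1: reformulation.} Let $\lambda_1>0$ be the bottom of the Dirichlet spectrum of $-\tfrac12\Delta$ on $\bB$ (so $\tfrac12\Delta\varphi_1=-\lambda_1\varphi_1$), and let $X^{\varphi_1}$ be Brownian motion in $\bB$ conditioned to stay in $\bB$ forever, i.e. the $h$-transform with $h=\varphi_1$; its generator is $\tfrac12\Delta+\nabla\log\varphi_1\cdot\nabla$ and it is reversible with respect to the finite invariant measure $\varphi_1(x)^2\,dx$. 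Its transition density with respect to that measure is symmetric and equals $\widetilde p(t,x,y)=e^{\lambda_1 t}\,p_{\bB}^D(t,x,y)/\bigl(\varphi_1(x)\varphi_1(y)\bigr)$, so on the diagonal $\widetilde p(t,x,x)=e^{\lambda_1 t}\,p_{\bB}^D(t,x,x)/\varphi_1(x)^2$. Thus Conjecture~\ref{conj3} is equivalent to: for fixed $t$, the radial function $x\mapsto\widetilde p(t,x,x)$ is increasing in $|x|$ — precisely the Dirichlet counterpart of the Laugesen--Morpurgo statement in Conjecture~\ref{conj1}, consistent with the observation in \S5 that conditioned Brownian motion obeys a Neumann-type boundary condition.

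\emph{Step 2: separation of variables.} Since $\varphi_1$ is radial, the eigenfunction expansion together with the addition theorem for spherical harmonics gives, with $r=|x|$,
\[
\frac{p_{\bB}^D(t,x,x)}{\varphi_1(r)^2}=c_d\sum_{k\ge0}(\dim\mathcal H_k)\,\frac{p_k(t,r,r)}{R_{0,1}(r)^2},
\]
where $R_{k,1}$ is the positive ground state on $(0,1)$ of the one-dimensional operator $L_k=\tfrac12\bigl(\partial_r^2+\tfrac{d-1}{r}\partial_r-\tfrac{k(k+d-2)}{r^2}\bigr)$ with Dirichlet condition at $1$ (so $R_{0,1}$ is $\varphi_1$ up to a constant), $p_k$ is the Dirichlet heat kernel of $L_k$ on $(0,1)$ with respect to $r^{d-1}\,dr$, and $c_d>0$. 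Factoring $p_k/R_{0,1}^2=(p_k/R_{k,1}^2)\,(R_{k,1}^2/R_{0,1}^2)$, it suffices to prove for every $k\ge0$ that
\[
\text{(A)}\quad r\mapsto \frac{p_k(t,r,r)}{R_{k,1}(r)^2}\ \text{is nondecreasing on }(0,1),\qquad \text{(B)}\quad r\mapsto \frac{R_{k,1}(r)}{R_{0,1}(r)}\ \text{is nondecreasing on }(0,1);
\]
then each summand is a product of nonnegative nondecreasing functions, and so is the sum. Statement (B) (trivial for $k=0$) is a ground-state comparison between $L_k$ and $L_0$: the potential $-k(k+d-2)/r^2$ is more repulsive at $0$, so $R_{k,1}$ vanishes to order $r^k$ there while $R_{0,1}$ stays positive, and monotonicity of the quotient should follow by a Sturm-type argument on the logarithmic derivatives, or directly from known monotonicity of quotients of Bessel functions (the $R_{k,1}$ are explicit: $R_{k,1}(r)=c\,r^{-(d-2)/2}J_{k+(d-2)/2}(j_{k+(d-2)/2,1}\,r)$).

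\emph{Step 3: the one-dimensional problem (A) via random walks.} Fix $k$ and repeat the construction of \S\S2--4 for the diffusion generated by $L_k$. Its $h$-transform with $h=R_{k,1}$ is a diffusion $\rho^{(k)}$ on $(0,1)$ with generator $L_k+\tfrac{R_{k,1}'}{R_{k,1}}\partial_r$, reversible with respect to $R_{k,1}(r)^2r^{d-1}\,dr$, whose transition-density diagonal with respect to that measure is exactly $e^{\lambda_{k,1}t/2}\,p_k(t,r,r)/R_{k,1}(r)^2$; so (A) says this diagonal is increasing in the state. Discretize $(0,1)$ into $N$ equal steps and approximate $\rho^{(k)}$ by a birth--death chain whose nearest-neighbour probabilities are read off from the discretized drift, in direct analogy with $Y^N_n$ in Definition~\ref{defY_n^N}; here the endpoint $r=1$, at which $R_{k,1}'/R_{k,1}\to-\infty$, is a non-hitting entrance boundary, and the strong inward drift plays the role of reflection. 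One then verifies the \emph{nondecreasing-loop property} for this chain, applies the path-pairing argument of the Proposition in \S2 together with Corollary~\ref{trans} to conclude that the chain's diagonal is monotone in the state, proves tightness and identifies the weak limit as in \S3 (the submartingale characterization adapted to the singular drift by localizing away from $r=1$ and controlling the boundary local time as in Lemma~\ref{occtime}), and passes to the limit as in \S4 to deduce (A).

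\emph{Main obstacles.} The decisive step is the nondecreasing-loop property for the $R_{k,1}$-transformed walk: expressing $p(1,m,m+1)\,p(1,m+1,m)$ in terms of the logarithmic derivative of $R_{k,1}$ near $r=m/N$, one needs a monotonicity statement for that product, together with a replacement for the condition that "the loop at the reflection point dominates" — since there is now no genuine reflection, only a strong inward drift, one must show quantitatively, uniformly in $N$ (and ideally in $k$), that the $h$-transform makes the near-boundary steps sufficiently biased inward. The log-concavity of $R_{0,1}=\varphi_1$ from Brascamp--Lieb \cite{BrLi} and the ODE satisfied by $R_{k,1}$ are the natural inputs. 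Secondary difficulties are the ground-state quotient bound (B), which for general convex domains would be a separate lemma but for the ball reduces to Bessel identities, and the convergence proof of \S3 with an unbounded drift. As a consistency check, the short-time regime is benign: for $x$ in the interior $p_{\bB}^D(t,x,x)\sim(2\pi t)^{-d/2}$ while $\varphi_1$ is strictly radially decreasing, so $p_{\bB}^D(t,x,x)/\varphi_1(x)^2$ is already increasing for small $t$; unlike the $d=2$ Neumann situation there is no small-time obstruction, and the conjecture is expected to hold for all $d\ge1$. Should (A) or (B) fail for some $k$, the plan still works modulo replacing the term-by-term comparison by a summed estimate bounding the finitely many low "bad" modes against the dominant $k=0$ term.
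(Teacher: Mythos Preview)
The paper does not prove this statement: Conjecture~\ref{conj3} is presented in \S\ref{generalconjecture} explicitly as an open conjecture, motivated by the analogy between conditioned Brownian motion and Neumann boundary conditions from \cite{BaMe}. There is therefore no proof in the paper to compare your proposal against.

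As to the proposal itself, it is a reasonable outline of an attack modeled on \S\S2--4, and your Step~1 (recasting the problem via the ground-state $h$-transform so that the conditioned diffusion plays the role of the reflected Bessel process) is exactly the heuristic the authors have in mind when they introduce the conjecture. But what you have written is a strategy, not a proof. You yourself flag the decisive gap: the nondecreasing-loop property for the discretized $h$-transformed chain is asserted, not verified, and the mechanism that replaced reflection in the paper's argument (the inequality $p_N(1,N,N)^2\ge 1/4$) has no obvious analogue when the boundary is an entrance point with unbounded inward drift. Likewise, the convergence argument of \S3 relied on the Stroock--Varadhan submartingale characterization of reflected Brownian motion; for a diffusion with a singular drift at $r=1$ you would need a different well-posedness/identification result, and ``localizing away from $r=1$'' does not by itself give uniqueness of the limit. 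Statement~(B) is plausible for the ball (it reduces to a quotient of Bessel functions of different orders at different scalings of the argument), but you have not actually checked it. In short: the reformulation is sound and the program is natural, but none of the hard steps are carried out, which is consistent with the fact that the paper leaves this as a conjecture.
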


An appropriate version of this conjecture 
(similar to Proposition \ref{dirichlet3}) motivated by the 
results in \cite{BaMe} can be formulated for 
more general convex domains.

\end{document}